\theoremstyle{definition}
\numberwithin{equation}{section}
\newcommand{\be}{\beta}
\theoremstyle{definition}\newtheorem{thm}{Theorem}[section]
\theoremstyle{definition}\newtheorem{cor}[thm]{Corollary}
\theoremstyle{definition}\newtheorem{lem}[thm]{Lemma}
\theoremstyle{definition}\newtheorem{prop}[thm]{Proposition}
\theoremstyle{definition}\newtheorem{defn}[thm]{Definition}
\theoremstyle{definition}\newtheorem{Rem}[thm]{Remark}
\theoremstyle{definition}\newtheorem{exam}[thm]{Example}
\theoremstyle{definition}
\def\be{\begin{enumerate}}
\def\ee{\end{enumerate}}
\begin{document}

\title{Locally closed sets and submaximal spaces}
\author{R. Mohamadian}
\address{Department of Mathematics, Shahid Chamran University of ahvaz, Ahvaz,
Iran}
\email{mohamadian\_r@scu.ac.ir}
\subjclass[2010]{54G99, 54G05, 54G10, 54C30} \keywords{locally closed set, locally indiscrete space, $T_D$-space, submaximal space, Stone-\v{C}ech compactification,  Hewitt realcompactification.}
\begin{abstract}
A topological space $X$ is called submaximal  if every dense subset of $X$ is open. In this paper, we show that if $\beta X$, the Stone-\v{C}ech compactification of $X$,  is a submaximal space, then  $X$ is a compact space and hence $\beta X=X$. We also prove that if  $\upsilon X$, the Hewitt realcompactification of $X$, is submaximal and first countable and $X$ is without isolated point, then $X$ is realcompact and hence $\upsilon X=X$. 
We observe that every submaximal Hausdorff space is pseudo-finite. It turns out that if $\upsilon X$ is a submaximal space, then $X$ is a pseudo-finite $\mu$-compact space. 
 An example is given which shows that $X$ may be submaximal but $\upsilon X$ may not be submaximal. 
Given a topological space $(X,{\mathcal T})$, the collection of all locally closed subsets of $X$ forms a base for a topology on $X$ which is denotes by ${\mathcal T_l}$. We study some topological properties between  $(X,{\mathcal T})$ and $(X,{\mathcal T_l})$, such as we show that  a) $(X,{\mathcal T_l})$ is discrete if and only if  $(X,{\mathcal T})$ is a $T_D$-space; b)  $(X,{\mathcal T})$ is a locally indiscrete space  if and only if ${\mathcal T}={\mathcal T_l}$; c) $(X,{\mathcal T})$ is indiscrete space if and only if $(X,{\mathcal T_l})$ is connected. We see that, in locally indiscrete spaces, the concepts of $T_0$, $T_D$, $T_\frac{1}{2}$, $T_1$, submaximal and discrete coincide.  Finally, we prove that every clopen subspace of an lc-compact space is  lc-compact. 
\end{abstract}
\maketitle
 
\section{Introduction}  

Throughout this paper we consider topological spaces on which  no separation axioms are assumed unless explicity stated. The topology of a space is denoted by ${\mathcal T}$ and $(X,{\mathcal T})$ will be replaced by $X$ if there is no chance for confusion. For a subset $A$ of $(X, {\mathcal T})$, the closure, the interior, the boundary and the set of accumulation points of $A$ are denoted by ${\rm cl}_{\mathcal T}(A)$ or ${\rm cl}_X(A)$, ${\rm int}_{\mathcal T}(A)$ or ${\rm int}_X(A)$, ${\rm Fr}_{\mathcal T}(A)$ or  ${\rm Fr}_X(A)$ and $A'$, respectively. In places where there is no chance for confusion $\overline{A}$ and $A^\circ$ stands for  ${\rm cl}_{\mathcal T}(A)$ and ${\rm int}_{\mathcal T}(A)$, respectively. If $A\subseteq X$ be open, closed or locally closed respect to the topology ${\mathcal T}$ on $X$, then we write sometimes to avoid confusion, ${\mathcal T}$-open, ${\mathcal T}$-closed or ${\mathcal T}$-locally closed, respectively. A subset $A$ of a topological space $X$ is called locally closed if for every $x\in A$ there is an open set $U\subseteq X$ such that $x\in U$ and $A\cap U$ is closed in $U$. Since the intersection of two locally closed sets is locally closed, the family of ${\mathcal T}$-locally closed sets forms a base for a finer topology ${\mathcal T_l}$ on $X$. For a Tychonoff (completely regular Hausdorff) space $X$, $\beta X$ is the Stone-\v{C}ech compactification and $\upsilon X$ is the Hewitt realcompactification of $X$. It is well known that $X$ is compact, pseudocompact or realcompact if and only if $\beta X=X$, $\beta X=\upsilon X$ or $\upsilon X=X$, respectively.  For a Tychonoff space the symbol $C(X)$ (resp. $C^*(X)$) denotes  the ring of all continuous real valued (resp. all bounded continuous real valued) functions defined on  $X$. We say that a subspace $S$ of $X$ is $C^*$-embedded in $X$ if every function in $C^*(S)$ can be extended to a function in $C^*(X)$. For $f\in C(X)$, the zero-set of $f$ is the set $Z(f)=\{x\in X:f(x)=0\}$. The set-theoretic complement of $Z(f)$ is denoted by $coz(f)$. The support of a function $f\in C(X)$ is the $\overline{coz(f)}$. For any $p\in\beta X$, the maximal ideal $M^p$ (resp. the ideal $O^p$) is the set of all $f\in C(X)$ for which $p\in {\rm cl}_{\beta X}Z(f)$ (resp. $p\in{\rm int}_{\beta X}{\rm cl}_{\beta X}Z(f)$). More generally, for $A\subseteq \beta X$, $M^A$ (resp. $O^A$) is the intersection of all $M^p$ (resp. $O^p$) with $p\in A$. It is clear that $O^A\subseteq M^A$.

A space $X$ is said to be a, a) door space if every set is either open or closed; b) $T_\frac{1}{2}$-space if every singleton is either open or closed; c) submaximal space if every dense subset is open; d) principal space, if every intersection of open sets is open, e) $c$-principal space, if every countable intersection of open sets (i.e., $G_\delta$-set) is open, f) resolvable space if it has two disjoint dense subsets. 

Tychonoff $c$-principal spaces are the same $P$-spaces, see Exercise 4J of \cite{GJ}. A door space is submaximal and a submaximal space is $T_\frac{1}{2}$. The one-point compactification of any discrete space is submaximal. A nonempty resolvable space never submaximal. For example, $\Bbb R$ and $\beta \Bbb Q$ are resolvable spaces. For more information about the locally closed sets, see \cite{AC, D, GR}, about the submaximal spaces, see \cite{AC, D, BDE, ABK}, about the door spaces, see \cite{BDE, DLT} and about the $T_\frac{1}{2}$-spaces, see \cite{AT}. For details about $\beta X$ and $\upsilon X$, see \cite{GJ, Wal} and about other concepts of general topology, see \cite{En, Wil}.      

Section 2, of this paper is devoted to investigate some separation properties between  $(X, {\mathcal T})$ and  $(X, {\mathcal T_l})$. Such as we show that  $(X, {\mathcal T_l})$ is discrete if and only if $(X, {\mathcal T})$ is a $T_D$-space.  We show that, in this section, if $\beta X$ is submaximal, then $X$ is compact and therefor, in this case, we conclude that $X=\beta X$.   We also prove that if  $\upsilon X$ is submaximal and first countable and $X$ is without isolated point, then $X$ is realcompact. We observe that every submaximal Hausdorff space is pseudo-finite. It turns out that if $\upsilon X$ is a submaximal space, then $X$ is a pseudo-finite $\mu$-compact space. 
In Section 3, we study and investigate the behavior locally indiscrete spaces. We observe that $(X, {\mathcal T})$ is a locally indiscrete space if and only if ${\mathcal T}={\mathcal T_l}$ if and only if every dense open subset of $X$ is regular open. In Section 4, we introduce some lc-properties such as lc-regular and lc-completely regular and compare them with the concepts regular and completely regular. We prove that every clopen subset of a lc-compact space is lc-compact.

\section{Locally closed sets and ${\mathcal T_l}$-topology}

 A space $X$ is called a $T_D$-space if every singleton is locally closed. It is well known that the space $X$ is submaximal if and only if every subset of $X$ is locally closed, see Theorem 4.2 of \cite{D} and also every subspace of a submaximal space is submaximal, see Theorem 1.1 of \cite{D}.

The proof of the following proposition is straightforward. 

\begin{prop}
The following statements are equivalent, for a subset $A$ of the space $X$.\\
a) $A$ is a locally closed set.\\
b) $A=G\cap B$, which $G$ is open and $B$ is closed.\\
c) $A=H\cap\overline{A}$, which $H$ is open.\\
d) $A=E-F$, which $E$ and $F$ are closed.\\
e) $\overline{A}-A$ is a closed set.\\
f) $A\subseteq (A\cup(X-\overline{A}))^\circ$.\\
g) $A\cup(X-\overline{A})$ is an open set. 	
\end{prop}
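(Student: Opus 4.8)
The plan is to prove the seven conditions equivalent by establishing a cycle of implications together with a couple of easy side-links, exploiting the basic identity $\overline{A}-A=\overline{A}\cap(X-A)$ and the definition of local closedness. Throughout I will freely use that $A\subseteq\overline{A}$ and that for open $U$ and any set $B$, $B\cap U$ is closed in the subspace $U$ iff $B\cap U=\overline{B}\cap U$ (closure taken in $X$). The natural route is (a)$\Rightarrow$(c)$\Rightarrow$(b)$\Rightarrow$(a), then (c)$\Rightarrow$(g)$\Rightarrow$(f)$\Rightarrow$(c), then (b)$\Leftrightarrow$(d) and (c)$\Leftrightarrow$(e), so that every statement is tied into the same equivalence class.

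First I would show (a)$\Rightarrow$(c): for each $x\in A$ pick an open $U_x$ with $A\cap U_x$ closed in $U_x$, hence $A\cap U_x=\overline{A}\cap U_x$; setting $H=\bigcup_{x\in A}U_x$ gives an open set with $A\subseteq H$ and $A=H\cap\overline{A}$, since $H\cap\overline{A}=\bigcup_x(U_x\cap\overline{A})=\bigcup_x(U_x\cap A)=A$. The implication (c)$\Rightarrow$(b) is immediate with $G=H$, $B=\overline{A}$. For (b)$\Rightarrow$(a): if $A=G\cap B$ with $G$ open, $B$ closed, then for $x\in A$ take $U=G$; then $A\cap U=G\cap B$, which is closed in $U=G$ because $B$ is closed in $X$. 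This closes the first cycle and shows (a), (b), (c) are equivalent.

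Next, the topological-interior reformulations. For (c)$\Rightarrow$(g): from $A=H\cap\overline{A}$ one computes $A\cup(X-\overline{A})=(H\cap\overline{A})\cup(X-\overline{A})=H\cup(X-\overline{A})$ (distributing and using $\overline A\cup(X-\overline A)=X$), which is a union of two open sets, hence open. Then (g)$\Rightarrow$(f) is trivial since $A\subseteq A\cup(X-\overline{A})$ and an open set equals its own interior. For (f)$\Rightarrow$(c): from $A\subseteq(A\cup(X-\overline{A}))^\circ$ and the fact that the right-hand side is an open set $H$ contained in $A\cup(X-\overline{A})$, intersecting with $\overline{A}$ gives $A=A\cap\overline A\subseteq H\cap\overline{A}\subseteq(A\cup(X-\overline{A}))\cap\overline{A}=A$, so $A=H\cap\overline{A}$ with $H$ open. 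This links (f) and (g) into the class. Finally, (b)$\Leftrightarrow$(d): $A=E-F=E\cap(X-F)$ exhibits $A$ as (closed)$\cap$(open), which is form (b) after renaming; conversely $A=G\cap B=B-(X-G)$ with $B$ and $X-G$ closed. And (e)$\Leftrightarrow$(c) (equivalently $\Leftrightarrow$(g)): note $X-(\overline{A}-A)=A\cup(X-\overline A)$, so $\overline{A}-A$ is closed iff $A\cup(X-\overline{A})$ is open, which is exactly (g).

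I do not expect a genuine obstacle here — as the paper says, the proof is straightforward; the only thing to be careful about is the bookkeeping of which closure is meant (always the closure in $X$) and verifying that $A\cap U_x$ being closed in $U_x$ really gives $A\cap U_x=\overline{A}\cap U_x$ rather than something weaker. The mild subtlety, if any, is in (f)$\Rightarrow$(c), where one must observe that $A\subseteq(A\cup(X-\overline A))^\circ$ already forces equality $A=(A\cup(X-\overline A))^\circ\cap\overline A$ rather than mere containment; this follows because $(A\cup(X-\overline A))^\circ\subseteq A\cup(X-\overline A)$ and intersecting with $\overline A$ kills the $X-\overline A$ part. Everything else is a one-line set-algebra manipulation.
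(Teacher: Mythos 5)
Your proof is correct and complete: all seven conditions are linked (the cycle (a)$\Rightarrow$(c)$\Rightarrow$(b)$\Rightarrow$(a), the cycle (c)$\Rightarrow$(g)$\Rightarrow$(f)$\Rightarrow$(c), plus (b)$\Leftrightarrow$(d) and (e)$\Leftrightarrow$(g) via $X-(\overline{A}-A)=A\cup(X-\overline{A})$), and the one delicate point, that $A\cap U_x$ closed in the open set $U_x$ forces $A\cap U_x=\overline{A}\cap U_x$, is handled correctly. The paper itself omits the proof, declaring it straightforward, and your argument is exactly the standard one it has in mind.
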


\begin{Rem}
a) Every closed (resp. open) set is locally closed.\\
b) Every dense locally closed set is open.\\
c) The intersection of finite number locally closed sets is locally closed.\\
d) Every intersection of locally closed sets  may not be locally closed.\\
e) The complement of a locally closed set need not be locally closed. For example, $A=\{\frac{1}{n}:n=1,\cdots\}$ is a locally closed set in $\Bbb R$ while $\Bbb R-A$ is not locally closed.\\
f) The union of two locally closed sets need not be locally closed. For example, suppose that $A=(-\infty, 0]$ and $B=\bigcup_{n=1}^\infty(\frac{1}{n+1},\frac{1}{n})$ in $\Bbb R$. Then $A\cup B$ is not locally closed.\\
g) The union of two completely separated locally closed sets is locally closed.\\
h) If $A$ is preopen, (that is, $A\subseteq{\rm int}{\rm cl}(A)$) then it is open if and only if it is locally closed.\\
i) If $\overline{A}$ is open, where $A$ is locally closed, then $A$ is open.\\
j) If $A$ is locally closed and $B$ is clopen, then $\overline{A\cap B}$ is locally closed.\\
k) If $A$ and $B$ are locally closed, then the equal $\overline{A\cap B}=\overline{A}\cap\overline{B}$ may not be true. For example, let $X=\{a,b\}$ and ${\mathcal T}=\{\emptyset, \{a\}, X\}$. Now suppose that $A=\{a\}$ and $B=\{b\}$.\\
l) Let $X$ be a topological space and $Y$ a subspace of $X$. The set $A\subseteq Y$ is locally closed in $Y$ if and only if $A=Y\cap B$, where $B$ is locally closed in $X$.\\
m) In a locally compact Hausdorff space, any subset is locally closed if and only if it is a locally compact set, see Theorem 18.4 of \cite{Wil} or Corollary 3.3.10 of \cite{En}.\\
n) Let $X, Y$ are topological spaces,  the continuous mapping $f:X\to Y$ is one-to-one and let the one element family $\{f\}$ separates points and closed sets. If $X$ is a $T_1$-space and $f(X)$ is open subset of $Y$, then $f(X)$ is  a $T_D$-space, see Lemma 2.3.19 of \cite{En}.\\ 
o) A locally closed set $A$ of $X$ is dense in $X$ if and only if there is an open set $G$ of $X$ such that $A\cap G$ is dense in $X$.\\
p) Every discrete subset of a $T_1$-space is locally closed.\\
q)  $A\subseteq X$ is locally closed in $X$ if and only if it is locally closed in $\overline{A}$. 
\end{Rem}

Given a topological space $(X,{\mathcal T})$, the collection of all locally closed subsets of $X$ forms a base for a topology on $X$ which is denotes by ${\mathcal T_l}$. It is clear that ${\mathcal T}\subseteq {\mathcal T_l}$ and in locally indiscrete spaces we have ${\mathcal T}={\mathcal T_l}$, see Proposition \ref{p33}. In the sequel, we study some topological properties between  $(X,{\mathcal T})$ and $(X,{\mathcal T_l})$.

\begin{prop}
$(X,{\mathcal T})$ is indiscrete if and only if $(X,{\mathcal T_l})$ is a connected space.
\end{prop}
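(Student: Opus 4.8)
The plan is to prove the two implications separately, with the forward direction being the routine one and the reverse direction being where the real content lies. First I would settle the easy implication: if $(X,{\mathcal T})$ is indiscrete, then the only closed sets are $\emptyset$ and $X$, so by Proposition 2.1(d) (writing $A = E - F$ with $E,F$ closed) the only locally closed sets are $\emptyset$ and $X$ themselves. Hence ${\mathcal T}_l = \{\emptyset, X\}$ is the indiscrete topology, which is trivially connected.

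For the converse I would argue by contrapositive: assuming $(X,{\mathcal T})$ is \emph{not} indiscrete, I must produce a disconnection of $(X,{\mathcal T}_l)$, i.e. a nontrivial clopen set in ${\mathcal T}_l$. Since $X$ is not indiscrete there is a ${\mathcal T}$-open set $U$ with $\emptyset \neq U \neq X$. The key observation is that $U$, being open, is locally closed, hence ${\mathcal T}_l$-open; and its complement $X - U$ is closed, hence also locally closed, hence ${\mathcal T}_l$-open. Therefore $U$ is ${\mathcal T}_l$-clopen and nontrivial, so $(X,{\mathcal T}_l)$ is disconnected. This uses only Remark 2.2(a) (every open set and every closed set is locally closed) together with the fact that ${\mathcal T}_l$ has the locally closed sets as a base, so every locally closed set is ${\mathcal T}_l$-open.

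I would then assemble these two halves into the biconditional. The main subtlety to watch is the degenerate case: if $X = \emptyset$ or $X$ is a one-point space, then $X$ is both indiscrete and $(X,{\mathcal T}_l)$ is (vacuously or trivially) connected, so the statement holds; and in that case there is no proper nonempty open set, consistent with the argument above. I do not anticipate a genuine obstacle here — the proposition is essentially immediate once one recalls that open sets and closed sets are both locally closed, so the "hard part" is merely to state things cleanly and not overlook that ${\mathcal T}_l$-openness of a set follows from its being locally closed (a basic open set), not the other way around.
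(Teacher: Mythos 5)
Your proof is correct. For the direction ``not indiscrete $\Rightarrow$ $(X,\mathcal{T}_l)$ disconnected'' you argue exactly as the paper does: a nontrivial $\mathcal{T}$-open set $U$ and its complement are both locally closed, hence both $\mathcal{T}_l$-open, giving a disconnection. Where you diverge is the other implication. You prove ``indiscrete $\Rightarrow$ connected'' directly, by computing from Proposition 2.1(d) that in an indiscrete space the only locally closed sets are $\emptyset$ and $X$, so that $\mathcal{T}_l$ is itself the indiscrete topology; this is a slightly stronger conclusion than connectedness and avoids any case analysis. The paper instead proves the contrapositive: starting from a $\mathcal{T}_l$-disconnection $X=G\cup H$, it picks a basic locally closed set $A=U\cap\operatorname{cl}_{\mathcal{T}}A$ with $x\in A\subseteq G$ and extracts a nontrivial $\mathcal{T}$-open set, splitting into the cases $U\neq X$ (then $U$ works) and $U=X$ (then $A$ is closed and $X-A$ is a nontrivial open set since it contains $H$ but misses $x$). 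Your route is shorter and identifies $\mathcal{T}_l$ exactly; the paper's route has the merit of manipulating the basis description $A=U\cap\overline{A}$ of $\mathcal{T}_l$, the same technique it reuses in the subsequent propositions on $T_0$ and $T_D$. Your remark about the degenerate empty and one-point cases is harmless and not needed.
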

\begin{proof}
$(\Rightarrow)$	
Let $G\in {\mathcal T}$ and let $\emptyset\neq G\neq X$. Clearly $G\in{\mathcal T_l}$ and $X-G\in {\mathcal T_l}$ and therefore $(X,{\mathcal T_l})$ is disconnected.\\
$(\Leftarrow)$ Suppose that $X=G\cup H$, which $G$ and $H$ are two nonempty disjoint ${\mathcal T_l}$-open sets and let $x\in G$. Hence there is a ${\mathcal T}$-locally closed set $A$  such that $x\in A\subseteq G$. Therefor $A=U\cap {\rm cl}_{\mathcal T}A$, where $U\in {\mathcal T}$. If $U\neq X$, then we are done. Assume that $U=X$. This implies that $A={\rm cl}_{\mathcal T}A$, that is, $X-A$ is a  ${\mathcal T}$-open. Since $\emptyset\neq H\subseteq X-A$, we infer that $X-A\neq\emptyset$. Now if $X-A=X$, then $A=\emptyset$ which is not true. Consequently, $\emptyset\neq X-A\neq X$ is a  ${\mathcal T}$-open set. This complete the proof.
\end{proof}

\begin{prop}
$(X,{\mathcal T})$ is a $T_0$-space if and only if $(X,{\mathcal T_l})$ is a $T_0$-space.
\end{prop}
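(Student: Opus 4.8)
The plan is to prove the two implications separately; the forward one is immediate and the converse needs only a short observation about topologically indistinguishable points.

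First, suppose $(X,{\mathcal T})$ is a $T_0$-space. Since ${\mathcal T}\subseteq{\mathcal T_l}$, whenever $x\neq y$ in $X$ and some ${\mathcal T}$-open set contains exactly one of them, that same set is ${\mathcal T_l}$-open and separates $x$ and $y$ in $(X,{\mathcal T_l})$; hence $(X,{\mathcal T_l})$ is $T_0$. This is just the general fact that $T_0$ passes to any finer topology, so nothing new is needed here.

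For the converse I would argue contrapositively. Assume $(X,{\mathcal T})$ is not $T_0$, so there exist distinct points $x,y\in X$ that are ${\mathcal T}$-indistinguishable, i.e. ${\rm cl}_{\mathcal T}\{x\}={\rm cl}_{\mathcal T}\{y\}$; equivalently, every set in ${\mathcal T}$ contains $x$ if and only if it contains $y$. The key step is to check that the same is true of every basic ${\mathcal T_l}$-open set, that is, of every ${\mathcal T}$-locally closed set $A$. Writing $A=G\cap B$ with $G$ ${\mathcal T}$-open and $B$ ${\mathcal T}$-closed (Proposition 2.1), if $x\in A$ then $x\in G$ forces $y\in G$, while $x\in B$ gives ${\rm cl}_{\mathcal T}\{y\}={\rm cl}_{\mathcal T}\{x\}\subseteq B$ and hence $y\in B$; thus $y\in A$, and by symmetry $y\in A$ implies $x\in A$. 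Consequently $x$ and $y$ lie in exactly the same members of the base of ${\mathcal T_l}$, hence in exactly the same ${\mathcal T_l}$-open sets, so $(X,{\mathcal T_l})$ is not $T_0$.

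The only real content is that middle step --- that ${\mathcal T}$-indistinguishability of a pair of points is inherited by each locally closed set --- and once the ``open $\cap$ closed'' description of locally closed sets from Proposition 2.1 is available this is routine, so I do not expect a genuine obstacle here.
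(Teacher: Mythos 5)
Your proof is correct and is essentially the paper's argument in contrapositive form: where the paper takes a basic ${\mathcal T_l}$-set $A=U\cap{\rm cl}_{\mathcal T}A$ containing $x$ but not $y$ and extracts a ${\mathcal T}$-open separating set (either $U$ itself, or a neighborhood of $y$ missing ${\rm cl}_{\mathcal T}A$), you prove the equivalent statement that ${\mathcal T}$-indistinguishable points lie in exactly the same locally closed sets, using the decomposition $A=G\cap B$ with $G$ open and $B$ closed. The underlying facts are the same and there is no gap.
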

\begin{proof}
$(\Rightarrow)$ It is trivial.\\
$(\Leftarrow)$ Let $x,y\in X$ and $x\neq y$. Hence there is $G\in{\mathcal T_l}$ such that $x\in G$ and $y\notin G$. Let $A$ be  ${\mathcal T}$-locally closed set such that $x\in A\subseteq G$. Therefor $A=U\cap {\rm cl}_{\mathcal T}A$, where $U\in {\mathcal T}$. If $y\notin U$, then we are done. If $y\in U$, then $y\notin{\rm cl}_{\mathcal T}A$. Otherwise $y\in A$ and so $y\in G$ which is not true. Thus there is $V\in{\mathcal T}$ such that $y\in V$ and $V\cap A=\emptyset$. This consequence that $x\notin V$ and this complete the proof. 
\end{proof}

If $(X,{\mathcal T})$ is a $T_1$-space then $(X,{\mathcal T_l})$ is discrete. The converse is not true. For example, let $X=\{a,b\}$ and ${\mathcal T}=\{\emptyset, \{a\}, X\}$. For the converse see the next proposition.

\begin{prop}
 $(X,{\mathcal T})$ is a $T_D$-space if and only if $(X,{\mathcal T_l})$ is discrete.	
\end{prop}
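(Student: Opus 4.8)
The plan is to argue directly from the definitions, exploiting that the $\mathcal{T}$-locally closed sets form a base for $\mathcal{T}_l$. Recall that $X$ is a $T_D$-space means every singleton $\{x\}$ is $\mathcal{T}$-locally closed, while $(X,\mathcal{T}_l)$ discrete means every singleton is $\mathcal{T}_l$-open.

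For the forward implication, I would start from the $T_D$ hypothesis: fix $x \in X$, so $\{x\}$ is $\mathcal{T}$-locally closed. Since the family of $\mathcal{T}$-locally closed sets is a base for $\mathcal{T}_l$, any such set is in particular $\mathcal{T}_l$-open, so $\{x\} \in \mathcal{T}_l$. As $x$ was arbitrary, every singleton is $\mathcal{T}_l$-open, i.e.\ $(X,\mathcal{T}_l)$ is discrete.

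For the converse, assume $(X,\mathcal{T}_l)$ is discrete and fix $x \in X$. Then $\{x\} \in \mathcal{T}_l$, and since the $\mathcal{T}$-locally closed sets form a base for $\mathcal{T}_l$, there is a $\mathcal{T}$-locally closed set $A$ with $x \in A \subseteq \{x\}$. This forces $A = \{x\}$, so $\{x\}$ is $\mathcal{T}$-locally closed; hence $X$ is a $T_D$-space.

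There is essentially no obstacle here: the content is entirely bookkeeping with the base for $\mathcal{T}_l$, and the only point worth stating explicitly is that a basic open set containing $x$ and contained in $\{x\}$ must equal $\{x\}$. One could alternatively phrase the converse using Proposition \ref{p33}-style manipulations, but the base argument is the cleanest and I would present it that way.
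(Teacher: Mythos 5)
Your proof is correct and follows essentially the same route as the paper: the forward direction is the observation that a $\mathcal{T}$-locally closed singleton is a basic $\mathcal{T}_l$-open set, and the converse uses a basic set $A$ with $x\in A\subseteq\{x\}$, forcing $A=\{x\}$, exactly as in the paper's argument. No gaps.
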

\begin{proof}
$(\Rightarrow)$ It is trivial.\\
$(\Leftarrow)$  Let $x\in X$. Hence $\{x\}\in {\mathcal T_l}$, and therefore there is a  ${\mathcal T}$-locally closed set	$A$ such that $x\in A\subseteq \{x\}$. It implies that $A=\{x\}$, i.e.,  $(X,{\mathcal T})$ is a $T_D$-space.
\end{proof}

\begin{Rem}
Every $T_D$-space is $T_0$. The converse is false. For example, we consider the topology ${\mathcal T}=\{(a,\infty):a\in \Bbb R\}\cup\{\emptyset, \Bbb R\}$ on $\Bbb R$.
\end{Rem}
If ${\mathcal T}$ be the principal
topology on $X$, then every element $x\in X$ has a minimal open neighborhood $M_x = \bigcap\{G\in{\mathcal T}: x\in G\}$. For details about principal topology, see \cite{R}.
\begin{lem}
Every principal $T_0$-space is $T_D$.	
\end{lem}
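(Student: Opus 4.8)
The plan is to show that in a principal $T_0$-space every singleton $\{x\}$ is locally closed, by producing an open set $U$ containing $x$ such that $\{x\}$ is closed in $U$. The natural candidate for $U$ is the minimal open neighborhood $M_x = \bigcap\{G\in{\mathcal T}: x\in G\}$, which exists precisely because ${\mathcal T}$ is principal. So the task reduces to verifying that $\{x\}$ is closed in the subspace $M_x$, i.e. that $M_x - \{x\}$ is open in $M_x$, or equivalently that for every $y\in M_x$ with $y\neq x$ there is an open set $V$ with $y\in V$ and $x\notin V$.

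First I would invoke the $T_0$ separation of $x$ and $y$: there is an open set $W$ separating them. If $x\notin W$ then $y\in W$ and $V=W\cap M_x$ works immediately. The delicate case is when the $T_0$ condition only gives us $x\in W$, $y\notin W$. Here is where I expect the main obstacle, and the key observation that resolves it is minimality of $M_x$: since $W$ is an open neighborhood of $x$, we have $M_x\subseteq W$, so $y\in M_x\subseteq W$, contradicting $y\notin W$. Hence this case cannot actually occur — for any $y\in M_x$ distinct from $x$, the $T_0$ set separating them must be an open neighborhood of $y$ missing $x$. That is exactly what is needed.

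Putting it together: given $x\in X$, set $U=M_x$, which is open. For each $y\in U\setminus\{x\}$, the above yields an open $V_y$ with $y\in V_y$ and $x\notin V_y$; then $V_y\cap U$ is open in $U$, contains $y$, and misses $x$. Taking the union over all such $y$ shows $U\setminus\{x\}$ is open in $U$, so $\{x\} = U\setminus(U\setminus\{x\})$ is closed in $U$. By Proposition~1.1 (or directly by the definition of locally closed), $\{x\}$ is locally closed in $X$. Since $x$ was arbitrary, $X$ is a $T_D$-space. I would present this compactly, emphasizing that the whole argument hinges on the single inequality $M_x\subseteq W$ for every open $W\ni x$, which is the defining property of the minimal neighborhood in a principal space.
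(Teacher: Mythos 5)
Your proof is correct and follows essentially the same route as the paper: take $U=M_x$, use the $T_0$ dichotomy for $y\in M_x\setminus\{x\}$, and note that the case ``open $W\ni x$ missing $y$'' is impossible since $M_x\subseteq W$, so every such $y$ has a neighborhood missing $x$, which is exactly the paper's claim $\{x\}=M_x\cap\overline{\{x\}}$. No gaps.
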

\begin{proof}
Suppose that $x\in X$. If $\overline{\{x\}}=\{x\}$, then we are done. Hence, assume that $x\neq x_0\in\overline{\{x\}}$. Since there is no an open set $G$ such that $x_0\in G$ and $x\notin G$ we infer that there is an open set $H$ such that $x\in H$ and $x_0\notin H$. Now let $U=M_x$ be the minimal open neighborhood of $x$. We claim that $\{x\}=U\cap\overline{\{x\}}$. Assume that $t\in U\cap\overline{\{x\}}$. We show that $t=x$. If not, then two cases occur. In the first case, there is an open set $V$ such that $t\in V$ and $x\notin V$, which is not true, for $t\in \overline{\{x\}}$. In the second case, there is an open set $W$ such that $t\notin W$ and $x\in W$. Now since $t\in U$, we conclude that $t\in W$ which is not true. Consequently, $t=x$ and this implies that $\{x\}$ is locally closed set. 
\end{proof}	

\begin{Rem}
a) If $(X, {\mathcal T})$ is a principal space, then every intersection of locally closed sets is locally closed.\\
b) If  $(X,{\mathcal T})$ is a principal space, then $(X,{\mathcal T_l})$ is principal. The converse is false. For example, 	$(\Bbb R,{\mathcal T_l})$ is principal but $(\Bbb R,{\mathcal T_u})$, where ${\mathcal T_u}$ denotes usual topology on $\Bbb R$, is not principal.
\end{Rem}

\begin{exam}
 A continuous image of a $T_D$-space need not be $T_D$-space. Let $X=(0,1)$ with usual topology and $Y=(1,\infty)$ with ${\mathcal T_Y}=\{Y\cap G:G\in {\mathcal T}\}$, where ${\mathcal T}=\{(a,\infty):a\in \Bbb R\}\cup\{\emptyset, \Bbb R\}$ on $\Bbb R$. We define $f:X\to Y$ by $f(x)=\frac{1}{x}$. Then $f$ is a one-to-one and onto which is continuous. To prove continuity,  suppose that $H\subseteq Y$ be open and $f(x_0)\in H$, where $x_0\in X$. There is $0<r\in \Bbb R$ such that $r<\frac{1}{x_0}$ and $H=(r,\infty)$. We put $G=(s,\frac{1}{r})$, where $s<x_0$. It is clear that $f(G)\subseteq H$. Note that $X$ is a $T_D$-space while $Y$ is not a $T_D$-space.
 
\end{exam}
If $\prod_{\alpha \in\Lambda}X_{\alpha}$ is a $T_D$-space, then $X_{\alpha}$ is a $T_D$-space for each $\alpha\in\Lambda$. The converse is true if $I$ is finite. In the next example we see that an arbitrary product of $T_D$-spaces need not be $T_D$.
\begin{exam}
Let $X_n=\{a,b\}$ and ${\mathcal T_n}=\{\emptyset, \{a\}, X\}$, for any $n\in \Bbb N$ and let $X=\prod_{n \in\Bbb N}X_n$. Suppose that $x=(x_n)\in X$, where $x_n=a$ for every $n\in \Bbb N$. It is clear that $x$ belongs to every nonempty open set in $X$. We claim that $\{x\}$ is not a locally closed set. Otherwise, there is an open set $G$ and a closed set $F$ such that $\{x\}=G\cap F$. If $F\neq X$, then $x\in F\cap(X-F)$ which is not true. If $F=X$, then $\{x\}=G$. But $G=\prod_{n \in\Bbb N}G_n$, where $G_n=X_n$, for every $n\notin I$, which $I\subseteq \Bbb N$ is finite and this is impossible. It is consequence that $\{x\}$ is not locally closed.
\end{exam}

\begin{lem}
 $X=\prod_{\alpha \in\Lambda}X_{\alpha}$ is a $c$-principal space if and only if all but finitely many of the $X_{\alpha}$ are trivial spaces.
\end{lem}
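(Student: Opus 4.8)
The plan is to prove the two implications separately; the implication ``$c$-principal $\Rightarrow$ all but finitely many $X_\alpha$ trivial'' is where the real work lies, and I would argue it contrapositively. First set aside the case where some factor is empty (then $X=\emptyset$ and the statement is degenerate), so assume every $X_\alpha$ is nonempty. Now suppose there are infinitely many non-trivial factors $X_{\alpha_1},X_{\alpha_2},\dots$ with the $\alpha_n$ distinct; non-triviality gives, for each $n$, an open set $G_n$ with $\emptyset\neq G_n\subsetneq X_{\alpha_n}$. Put $D=\bigcap_{n\ge 1}\pi_{\alpha_n}^{-1}(G_n)$, which is a $G_\delta$-set of $X$ since each projection is continuous. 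I would check $D\neq\emptyset$ by taking a point whose $\alpha_n$-th coordinate lies in $G_n$ for every $n$, and then show $D$ is not open: given $f\in D$ and a basic box $\prod_\alpha U_\alpha\ni f$ with $\prod_\alpha U_\alpha\subseteq D$ and $U_\alpha=X_\alpha$ off a finite set $E$, choose $n$ with $\alpha_n\notin E$ (possible since the $\alpha_n$ are infinitely many) and a point $y\in X_{\alpha_n}\setminus G_n$; the point $f'$ obtained from $f$ by changing only its $\alpha_n$-th coordinate to $y$ then lies in the box but not in $D$, a contradiction. Hence $D$ witnesses that $X$ is not $c$-principal.

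For the converse I would first isolate two elementary facts. A product of indiscrete spaces is indiscrete: every basic box is $\emptyset$ or the whole product, since each coordinate open set is $\emptyset$ or the whole factor. And a finite product of $c$-principal spaces is $c$-principal: by induction it suffices to treat $X\times Y$, where for $D=\bigcap_n W_n$ open in $X\times Y$ and $p=(x,y)\in D$ one shrinks each $W_n$ to a box $U_n\times V_n\ni p$ and observes that $\bigcap_n(U_n\times V_n)=\big(\bigcap_n U_n\big)\times\big(\bigcap_n V_n\big)$ is an open neighbourhood of $p$ contained in $D$. Now if all the $X_\alpha$ with $\alpha\notin F$ are trivial, $F$ finite, then reindexing the product gives $X\cong\big(\prod_{\alpha\in F}X_\alpha\big)\times\big(\prod_{\alpha\notin F}X_\alpha\big)$; the right-hand factor is indiscrete, hence $c$-principal, the left-hand factor is a finite product of $c$-principal spaces, hence $c$-principal, and therefore $X$ is $c$-principal as a product of two such spaces.

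The difficulties are matters of care rather than of idea. In the forward direction one must ensure the perturbation $f\mapsto f'$ really keeps the point inside the chosen basic box — which is exactly why one needs an index $\alpha_n$ left unconstrained by that box — and that each non-trivial factor genuinely supplies both a point of $G_n$ and a point of its complement. In the converse the only subtle point is the reduction ``cofinitely many trivial factors force an essentially finite product,'' which rests on the indiscrete-product fact together with the associativity of products; here one also uses that the finitely many non-trivial factors are themselves $c$-principal, which is what makes their finite product $c$-principal (and which is in any case forced when $X$ is $c$-principal, since each factor of a $c$-principal product is a retract of it, hence $c$-principal). Finally, the empty-factor case should be excluded at the outset so that points with prescribed coordinates actually exist.
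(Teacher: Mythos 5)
Your forward implication is correct and is the standard argument: after excluding empty factors (a convention one really does need, since an empty factor makes $X=\emptyset$ trivially $c$-principal no matter what the other factors are), the set $D=\bigcap_n\pi_{\alpha_n}^{-1}(G_n)$ is a nonempty $G_\delta$-set, and the coordinate-perturbation argument shows no basic box around a point of $D$ can stay inside $D$, so $D$ is not open. Since the paper's own proof consists of the single phrase ``it is straightforward,'' there is no argument to compare against; the only issue is correctness, and this half is fine.

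The converse, however, contains a genuine gap --- more precisely, the implication as literally stated is false, and your proof silently inserts the missing hypothesis. From ``all but finitely many $X_\alpha$ are trivial'' you cannot conclude that the remaining factors $X_\alpha$, $\alpha\in F$, are $c$-principal, yet your argument uses exactly that when you call $\prod_{\alpha\in F}X_\alpha$ ``a finite product of $c$-principal spaces.'' Take $\Lambda$ a singleton and $X_1=\Bbb R$ with the usual topology: the hypothesis holds vacuously, but $\Bbb R$ is not $c$-principal, since $\bigcap_n\left(-\frac{1}{n},\frac{1}{n}\right)=\{0\}$ is a non-open $G_\delta$-set; the same failure occurs for $\Bbb R$ times any family of indiscrete factors. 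Your parenthetical remark that $c$-principality of the non-trivial factors ``is in any case forced when $X$ is $c$-principal'' cannot rescue this direction: there the $c$-principality of $X$ is the conclusion, not a hypothesis, so the appeal is circular. The honest fix is to state the lemma with the standing assumption that every $X_\alpha$ is nonempty and $c$-principal (equivalently, to prove: such a product is $c$-principal if and only if all but finitely many factors are indiscrete); under that reading your two reductions --- a product of indiscrete spaces is indiscrete, and a finite product of $c$-principal spaces is $c$-principal via $\bigcap_n(U_n\times V_n)=\bigl(\bigcap_nU_n\bigr)\times\bigl(\bigcap_nV_n\bigr)$ --- do give a complete and correct proof.
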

\begin{proof}
It is straightforward.
\end{proof}
In the sequel, by $|A|$ we mean the cardinality of $A$ for every set $A$.
\begin{Rem}
Let	 $X=\prod_{\alpha \in\Lambda}X_{\alpha}$ be a $c$-principal space and $|I|\leq\aleph_0$. Then $X$ is a $T_D$-space if and only if each factor space is a $T_D$-space, see part (b) of Exercise 2.3.B of \cite{En}.
\end{Rem}	

Every $T_\frac{1}{2}$-space is $T_D$. The converse is not true. For example, let $X=\Bbb N$ and ${\mathcal T}=\{E_n:n=1,\cdots\}\cup\{\emptyset\}$, where $E_n=\{n,n+1,\cdots\}$, for any $n\in\Bbb N$. Since $\{n\}=E_n\cap(\Bbb N-E_{n+1})$, we infer that $\Bbb N$ is a $T_D$-space. It is clear that the set $\{2\}$ neither open nor closed, so $\Bbb N$ is not a $T_\frac{1}{2}$-space.

Recall that union of two locally closed sets may not be locally closed. In the next example, we see that even if we add a cluster point to a locally closed set, the resulting set may not be locally closed. In other words,
if $A$ is locally closed and $x\in\overline{A}-A$, then $A\cup\{x\}$ need not be locally closed.
\begin{exam}
Suppose that $X=\Bbb R$,  and for any $k\in \Bbb N$, let $A_k=\{\frac{n+1}{kn+1}:n=1,\cdots\}$. We consider $A=\bigcup_{k=1}^\infty A_k$ and we put $B=A-\{1,\frac{1}{2},\cdots\}$. Then $\overline{B}-B=\{0,1,\frac{1}{2},\cdots\}$ is closed and hence $B$ is locally closed in $\Bbb R$. Now assume that $C=B\cup\{0\}$. Clearly,  $\overline{B}=\overline{C}$. Note that $\overline{C}-C=\{1,\frac{1}{2},\cdots\}$ is not closed set in $\Bbb R$ and hence $C$ is not locally closed.
\end{exam}
	
\begin{exam}
An arbitrary product of Tychonoff, compact and submaximal spaces need not be submaximal. For example, assume that $X_n^*=X_n\cup\{\sigma_n\}$ be a one-point compactification of a discrete space $X_n$, for every $n\in \Bbb N$. Suppose that $X=\prod_{n\in\Bbb N}X_n^*$ and suppose that $A=\prod_{n\in\Bbb N}X_n$, then $A$ is dense in $X$ which is not open. 

\end{exam}

\begin{prop}
Let $X$ be a submaximal space and $A, B\subseteq X$ with $A\cap B=\emptyset$. Then $\overline{A}\cap\overline{B}$ is discrete.
\end{prop}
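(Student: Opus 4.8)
The plan is to reduce the statement to the following lemma and then verify its hypotheses: \emph{in a submaximal space, every closed nowhere dense subset is discrete.} Granting this, set $D:=\overline{A}\cap\overline{B}$; it is closed, being an intersection of two closed sets, so it will be enough to check that $D$ is nowhere dense and then to invoke the lemma with $N=D$.

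To show $D$ is nowhere dense it suffices, since $D$ is closed, to exclude the existence of a nonempty open set $U\subseteq D$. For such a $U$, openness of $U$ together with $U\subseteq\overline{A}$ forces $A\cap U$ to be dense in the subspace $U$ (each point of $U$ lies in $\overline{A}$, so every relatively open neighbourhood meets $A$, hence meets $A\cap U$); by the same token $B\cap U$ is dense in $U$. Since $A\cap B=\emptyset$, we obtain two disjoint nonempty dense subsets of $U$, so $U$ is resolvable. But $U$ is an open subspace of the submaximal space $X$, hence itself submaximal, and a nonempty submaximal space is never resolvable --- a contradiction. Thus $D$ has empty interior.

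For the lemma, let $N$ be closed and nowhere dense and fix $x\in N$. Then $X\setminus N$ is dense (it is open and $N$ has empty interior), so $S:=(X\setminus N)\cup\{x\}$ is dense; by submaximality $S$ is open, whence $N\setminus\{x\}=X\setminus S$ is closed and $\{x\}=N\cap S$ is open in the subspace $N$. As $x\in N$ was arbitrary, every singleton in $N$ is relatively open, i.e.\ $N$ is discrete. Applying this to $N=D$ completes the proof.

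The one place where care is needed is in exploiting the hypothesis $A\cap B=\emptyset$: rather than using it directly, I route it through resolvability, observing that an open piece of $\overline{A}\cap\overline{B}$ would carry the disjoint dense traces of $A$ and of $B$. Beyond this, the argument rests only on facts already available in the excerpt --- that subspaces of submaximal spaces are submaximal and that nonempty submaximal spaces are irresolvable --- so I do not anticipate any further obstacle; the main content is simply isolating the ``closed nowhere dense $\Rightarrow$ discrete'' lemma and its short density-trick proof.
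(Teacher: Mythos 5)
Your proof is correct, but it takes a genuinely different route from the paper's. The paper argues pointwise: for $x\in\overline{A}\cap\overline{B}$ it uses the fact that in a submaximal space every subset is locally closed (applied inside the subspaces $\overline{A}$ and $\overline{B}$) to write $A\cup\{x\}=U\cap\overline{A}$ and $B\cup\{x\}=V\cap\overline{B}$ with $U,V$ open in $X$, and then disjointness of $A$ and $B$ gives $\{x\}=(U\cap V)\cap(\overline{A}\cap\overline{B})$, exhibiting an explicit isolating neighbourhood of $x$. You instead factor the statement through a reusable lemma --- in a submaximal space every closed nowhere dense set is discrete, proved by the density trick $S=(X\setminus N)\cup\{x\}$ --- and reduce to showing $\overline{A}\cap\overline{B}$ has empty interior, which you do by observing that a nonempty open $U\subseteq\overline{A}\cap\overline{B}$ would carry the disjoint dense traces $A\cap U$ and $B\cap U$, contradicting the irresolvability of the (hereditarily) submaximal subspace $U$. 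Both arguments rest only on facts stated in the paper (hereditary submaximality, irresolvability of nonempty submaximal spaces, respectively the locally-closed characterization), and both are complete; the paper's proof is shorter and more explicit about the isolating open set, while yours isolates a lemma of independent interest (closed nowhere dense subsets of submaximal spaces are discrete, a step toward the standard fact that submaximal spaces are nodec) and makes transparent exactly where the hypothesis $A\cap B=\emptyset$ is used.
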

\begin{proof}
Suppose that $x\in\overline{A}\cap\overline{B}$. Hence $A\cup\{x\}$	is locally closed in $\overline{A}$. Therefor there is an open set $G$ in $\overline{A}$ such that $A\cup\{x\}=G\cap{\rm cl}_{\overline{A}}(A\cup\{x\})$. Let $U$ be open in $X$ such that $G=U\cap \overline{A}$. Note that ${\rm cl}_{\overline{A}}(A\cup\{x\})={\rm cl}_X(A\cup\{x\})\cap\overline{A}=\overline{A}\cap\overline{A}=\overline{A}$. This implies that $A\cup\{x\}=U\cap\overline{A}$. Similarly, there is an open set $V$ in $X$ such that $B\cup\{x\}=V\cap\overline{B}$. Now $\{x\}=(A\cup\{x\})\cap(B\cup \{x\})=(U\cap\overline{A})\cap(V\cap\overline{B})=(U\cap V)\cap(\overline{A}\cap\overline{B})$. This consequence that $x$ is an isolated point of  $\overline{A}\cap\overline{B}$.      
\end{proof}
If  $X$ is a submaximal space and $A\subseteq X$, then ${\rm Fr}(A)$ is a discrete subset of $X$.  Also if $X$ is a submaximal space and $A$ is a discrete subset of $X$, then $\overline{A'}$ is discrete and if in addition $X$ is a $T_1$-space, then $A'$ is discrete.
 
\begin{thm}\label{t1}
If $\beta X$ is a submaximal space, then $X$ is compact. In this case $\beta X=X$.
\end{thm}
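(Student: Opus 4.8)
The plan is to show that the remainder $R:=\beta X\setminus X$ is empty; as $\beta X$ is compact this gives $\beta X=X$, hence $X$ compact. I would first note that $X$, being dense in $\beta X$ and (since $\beta X$ is submaximal) locally closed in $\beta X$, is in fact open in $\beta X$, because a dense locally closed set is open. Thus $R$ is closed in $\beta X$, hence compact, and, being disjoint from the dense set $X$, nowhere dense; in particular $R$ contains no isolated point of $\beta X$.

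Next I would prove the key structural fact that $\beta X$ has only finitely many non-isolated points, and here the compactness of $\beta X$ (equivalently, the Baire category theorem) is essential. If $\beta X$ had infinitely many non-isolated points, choose distinct ones $x_{1},x_{2},\dots$. Each $\beta X\setminus\{x_{n}\}$ is open (singletons are closed) and dense ($x_{n}$ is not isolated), so by the Baire property the dense-$G_{\delta}$ set $\beta X\setminus\{x_{n}:n\in\mathbb N\}$ is dense; submaximality then forces it to be open, i.e.\ $\{x_{n}:n\in\mathbb N\}$ is closed, hence compact. Applying the same reasoning to $\{x_{n}:n\ne k\}$ shows each $\{x_{k}\}$ is open in $\{x_{n}:n\in\mathbb N\}$, so this set is also discrete --- but a compact discrete space is finite, a contradiction. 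Hence the set $G$ of isolated points of $\beta X$ is open and cofinite; and $G\subseteq X$ since $X$ is dense. If $G$ is finite then $\beta X$ is finite and there is nothing to prove, so I may assume $G$ is infinite.

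Now I would derive a contradiction from $R\ne\varnothing$. Pick $p\in R$; by the first step $p$ is a non-isolated point of $\beta X$. Since $\beta X$ is Hausdorff and $\beta X\setminus G$ is finite, choose an open $O\ni p$ with ${\rm cl}_{\beta X}O\subseteq G\cup\{p\}$; then $p$ is the unique non-isolated point of the compact set ${\rm cl}_{\beta X}O$, so $D:={\rm cl}_{\beta X}O\setminus\{p\}$ is an infinite discrete subset of $G$ with ${\rm cl}_{\beta X}D=D\cup\{p\}$. Because $p\notin X$, $D$ is closed in $X$, and its points are isolated in $X$ (they lie in $G$). Splitting $D=D_{1}\sqcup D_{2}$ into two infinite pieces, each $D_{i}$ is clopen in $X$: open as a union of points isolated in $X$, closed as a closed subset of the closed discrete set $D$. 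Therefore $D_{1}$ and $D_{2}$ are completely separated in $X$ (by the characteristic function $\chi_{D_{1}}\in C^{*}(X)$), so ${\rm cl}_{\beta X}D_{1}\cap{\rm cl}_{\beta X}D_{2}=\varnothing$. On the other hand each $D_{i}$ is infinite, hence not compact, so ${\rm cl}_{\beta X}D_{i}\supsetneq D_{i}$; since ${\rm cl}_{\beta X}D_{i}\subseteq D\cup\{p\}$ and ${\rm cl}_{\beta X}D_{i}\cap D=D_{i}$ (points of $D$ are isolated in $\beta X$), we get $p\in{\rm cl}_{\beta X}D_{i}$ for $i=1,2$, contradicting disjointness. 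Hence $R=\varnothing$ and $X=\beta X$ is compact.

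The main obstacle, I expect, is the middle step. Submaximality enters there in the slightly unexpected guise ``a dense $G_{\delta}$ subset of $\beta X$ is open'', and it really is needed, since a compact Hausdorff space can have infinitely many non-isolated points. Once $\beta X$ is known to be almost discrete, the rest is routine; the one point to watch is that subsets of $G$ that are contained in a closed discrete subspace of $X$ are automatically clopen in $X$, which is exactly what allows the universal property of $\beta X$ to produce the contradiction.
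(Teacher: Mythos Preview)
Your argument is correct, and it reaches the same structural picture as the paper---$\beta X$ has only finitely many non-isolated points---but by a genuinely different route. The paper does not prove this directly: it invokes Theorem~4.20 of \cite{ABK}, which says that a countably compact submaximal space is a finite disjoint union of one-point compactifications of discrete spaces, and then finishes with a cardinality argument: if some added point $\sigma_i$ lies in $\beta X\setminus X$, the corresponding infinite discrete piece $D$ is $C^*$-embedded in $X$, whence ${\rm cl}_{\beta X}D=\beta D$ sits inside a set of size $|D|$, contradicting $|\beta D|=2^{2^{|D|}}$. Your proof replaces the cited structure theorem by a clean Baire-category argument (dense $G_\delta$ $\Rightarrow$ open in a submaximal space), and replaces the cardinality estimate by the elementary splitting trick: two infinite clopen halves of a closed discrete set in $X$ are completely separated, so their $\beta X$-closures cannot share the remainder point $p$. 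The trade-off is that the paper's proof is shorter on the page because the heavy lifting is outsourced, while yours is fully self-contained and avoids both the external reference and the appeal to $|\beta D|=2^{2^{|D|}}$.
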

\begin{proof}
Since $\beta X$ is countably compact, by Theorem 4.20 of \cite{ABK},  $\beta X$ is a finite disjoint union of one-point compactification of some discrete spaces. Suppose that $\beta X=\bigcup_{i=1}^nX^*_i$, where $X^*_i$ is the one-point compactification  of  discrete space $X_i$, for every $i=1,\cdots,n$. Assume that $X^*_i=X_i\cup\{\sigma_i\}$, $Y=\bigcup_{i=1}^nX_i$ and $A=\{\sigma_1,\cdots,\sigma_n\}$. Clearly, $Y$ is discrete and hence $Y\subseteq X$. It is sufficient to show that $A\subseteq X$. On the contrary suppose that $A\cap(\beta X-X)=I$ is a nonempty set. Then $D=\bigcup_{i\in I}X_i$ is discrete and $X=D\cup(\bigcup_{i\notin I}X^*_i)$. One can easily see that $D$ is $C^*$-embedded in $X$ and consequently  is $C^*$-embedded in $\beta X$. Hence, ${\rm cl}_{\beta X}D=\beta D$ and therefor $\beta D\subseteq\bigcup_{i\in I}X^*_i$. This implies that $|\beta D|\leq|\bigcup_{i\in I}X^*_i|=\sum_{i\in I}|X_i|=|D|$ which is contradiction, for by Theorem 9.2 of \cite{GJ} we have  $|\beta D|=2^{2^{|D|}}$. 
\end{proof}

The converse of the above theorem is not true, in general. For example, $X=\beta \Bbb Q$ is compact, while $\beta X=\beta\Bbb Q$ is not a submaximal space. Also, if $X$ is a submaximal space, then $\beta X$ may not be submaximal. For example, $\Bbb N$ is submaximal but $\beta\Bbb N$ is not submaximal. 
 
\begin{cor}
Let $\upsilon X$ be a submaximal space. The following statements are equivalent.\\
a) $X$ is compact.\\
b) $X$ is $\sigma$-compact.\\
c) $X$ is countably compact.\\
d) $X$ is pseudocompact.
\end{cor}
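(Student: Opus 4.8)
The plan is to deduce everything from Theorem~\ref{t1} together with standard properties of $\beta X$ and $\upsilon X$. First I would dispose of the trivial implications: $(a)\Rightarrow(b)$ and $(a)\Rightarrow(c)$ hold because a compact space is $\sigma$-compact and countably compact, while $(c)\Rightarrow(d)$ is the classical fact that a countably compact space is pseudocompact (an unbounded $f\in C(X)$ would produce an infinite set $\{x_n\}$ with $|f(x_n)|>n$ having no accumulation point in $X$).

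The heart of the matter is $(d)\Rightarrow(a)$. If $X$ is pseudocompact then $\upsilon X$ is compact, so $\upsilon X=\beta X$; since $\upsilon X$ is submaximal by hypothesis, $\beta X$ is submaximal, and Theorem~\ref{t1} gives at once that $X$ is compact (indeed $\beta X=X$). Combined with the trivial implications this already establishes $(a)\Leftrightarrow(c)\Leftrightarrow(d)$, so the corollary reduces to folding $(b)$ into this circle.

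For $(b)\Rightarrow(a)$ I would argue as follows: a $\sigma$-compact space is Lindel\"of, hence realcompact, so $\upsilon X=X$ and therefore $X$ itself is submaximal. Since every submaximal Hausdorff space is pseudo-finite, each compact subset of $X$ is finite; writing $X=\bigcup_{n=1}^{\infty}K_n$ with the $K_n$ compact then forces $X$ to be countable. It remains to conclude that such an $X$ must be compact, and this is the step I expect to be the main obstacle: once the space has been reduced to a countable submaximal space with $\upsilon X=X$, pseudo-finiteness by itself does not obviously finish the job, and the argument should bring in the further structure available here — namely that $X$ is also $\mu$-compact, being a space whose Hewitt realcompactification is submaximal — in order to rule out infinite examples. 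Once $(b)\Rightarrow(a)$ is secured and $(a)\Rightarrow(b)$ is noted to be immediate, all four statements are equivalent.
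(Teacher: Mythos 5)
Your treatment of $(a)\Rightarrow(b)$, $(a)\Rightarrow(c)$, $(c)\Rightarrow(d)$ and of the key implication $(d)\Rightarrow(a)$ coincides with the paper's argument: pseudocompactness gives $\upsilon X=\beta X$, so the hypothesis makes $\beta X$ submaximal and Theorem \ref{t1} yields that $X$ is compact. So the equivalence of (a), (c) and (d) is correctly established in your proposal, by essentially the same route as in the paper (the paper runs the cycle $(a)\Rightarrow(b)\Rightarrow(c)\Rightarrow(d)\Rightarrow(a)$, its last step being exactly your $(d)\Rightarrow(a)$).

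The step you flagged as the main obstacle, however, is not merely an obstacle: it cannot be completed, because $(b)\Rightarrow(a)$ is false under the stated hypothesis. Take $X=\Bbb N$ discrete: it is realcompact, so $\upsilon X=X$, and being discrete it is submaximal; it is $\sigma$-compact (a countable union of finite sets) but neither compact, nor countably compact, nor pseudocompact. Your partial analysis ($\sigma$-compact $\Rightarrow$ Lindel\"of $\Rightarrow$ realcompact, hence $X$ itself is submaximal, pseudo-finite and therefore countable) is correct as far as it goes, and $\Bbb N$ shows that this is exactly where any such argument must stop; in particular $\mu$-compactness cannot rescue it, since $\Bbb N$ is $\mu$-compact as well (it is realcompact, so $C_K(\Bbb N)=I_\psi(\Bbb N)$ by Theorem 8.19 of \cite{GJ}). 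Note that the paper's own proof of $(b)\Rightarrow(c)$ asserts that a $\sigma$-compact space ``is a union of a finite number of compact spaces,'' which misreads $\sigma$-compactness (a finite union of compact sets is already compact), so that implication is defective in the paper too; the corollary is correct only with item (b) deleted, i.e., as the equivalence of (a), (c) and (d) that you proved.
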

\begin{proof}
$(a\Rightarrow b)$ and $(c\Rightarrow d)$ are trivial.\\
$(b\Rightarrow c)$ Since $X$ is union a finite number of compact spaces, then it is countably compact.\\
$(c\Rightarrow a)$ If $X$ is a pseudocompact space, then $\upsilon X=\beta X$ and by hypothesis $\beta X$ is submaximal. Now by Theorem \ref{t1} we conclude that $X$ is compact.	
\end{proof}

\begin{lem}
If $X$ is a dense countably compact set in $T$ and $T$ is a Hausdorff submaximal space, then $X=Y$. 
\end{lem}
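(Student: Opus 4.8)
Reading the conclusion as the evidently intended statement $X=T$, the plan is to obtain it in three short steps: first I would upgrade the countable compactness of $X$ to genuine compactness by exploiting submaximality, then use compactness together with the Hausdorff hypothesis to see that $X$ is closed in $T$, and finally combine closedness with density.

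For the first step I would recall from the introduction that submaximality is hereditary (Theorem 1.1 of \cite{D}), so the subspace $X$ of the submaximal space $T$ is itself submaximal; being a subspace of a Hausdorff space it is also Hausdorff, and it is countably compact by hypothesis. Theorem 4.20 of \cite{ABK} --- the structure theorem already used in the proof of Theorem \ref{t1} --- then applies to $X$ and shows that $X$ is a finite disjoint union $\bigcup_{i=1}^{n}X_i^{*}$, where each $X_i^{*}$ is the one-point compactification of a discrete space $X_i$. Since each $X_i^{*}$ is compact and a finite union of compact sets is compact, $X$ is compact.

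For the second step I would use the standard fact that a compact subspace of a Hausdorff space is closed, so $X$ is closed in $T$. For the third step, $X$ is dense in $T$, hence $\overline{X}=T$, and combined with closedness this yields $X=\overline{X}=T$. Note that density is essential only here: a one-point subspace already shows that countable compactness alone does not force $X=T$.

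The one place that needs care is the first step: one must observe that the hypotheses of the \cite{ABK} structure theorem are inherited by the subspace $X$, rather than merely applying the theorem to the compact space $\beta X$ as was done for Theorem \ref{t1}. Once $X$ is known to be compact, the rest is immediate. A more pedestrian alternative would be to argue directly that a dense countably compact subspace of a Hausdorff submaximal space is closed --- here $T\setminus X$ is nowhere dense and hence, by submaximality, closed and discrete --- but deriving a contradiction from $T\setminus X\neq\emptyset$ seems to require a substitute for first countability, so routing through \cite{ABK} is cleaner.
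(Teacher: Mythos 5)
Your proposal is correct and follows essentially the same route as the paper: pass submaximality to the subspace $X$, invoke the countably-compact-submaximal machinery of \cite{ABK} to get compactness of $X$ (the paper cites Theorem 4.21 there directly, while you reassemble it from the structure Theorem 4.20), then use that a compact subspace of a Hausdorff space is closed and combine with density to get $X=T$. You also correctly identify the typo $X=Y$ for $X=T$ and make explicit the hereditary-submaximality step the paper leaves implicit.
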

\begin{proof}
Since $X$ is a submaximal space, then by Theorem 4.21 of \cite{ABK}, $X$ is compact and since $T$ is Hausdorff, we infer that $X$ is closed in $T$. Therefor, by density  of $X$ we have $X=T$. 
\end{proof}
By the above lemma the proof of the next corollary is obvious.
\begin{cor}
If $\upsilon X$ is submaximal and $X$ is countably compact, then $X$ is realcompact.  
\end{cor}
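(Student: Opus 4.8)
The plan is to obtain the conclusion as an immediate consequence of the lemma immediately preceding, applied with the ambient space taken to be $T=\upsilon X$. Recall from the Introduction that for a Tychonoff space $X$ the Hewitt realcompactification $\upsilon X$ is again Tychonoff, hence in particular Hausdorff, and that $X$ is a \emph{dense} subspace of $\upsilon X$. Since by hypothesis $\upsilon X$ is submaximal, we see that $\upsilon X$ is a Hausdorff submaximal space in which $X$ is a dense subset, and $X$ is countably compact by hypothesis. These are precisely the hypotheses of the lemma.

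First I would invoke the lemma to conclude that $X$ coincides with its ambient space, i.e.\ $X=\upsilon X$. Then, using the standard characterization recalled in the Introduction --- namely that $\upsilon X=X$ if and only if $X$ is realcompact --- I would conclude that $X$ is realcompact, which is exactly the assertion.

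For completeness one can also trace the mechanism behind the lemma in this special case: $X$, being a subspace of the submaximal space $\upsilon X$, is itself submaximal (every subspace of a submaximal space is submaximal, Theorem~1.1 of \cite{D}); a countably compact submaximal space is compact (Theorem~4.21 of \cite{ABK}); a compact subset of a Hausdorff space is closed; and a dense closed subset equals the whole space, whence $X=\upsilon X$. I do not expect any genuine obstacle here: the only ingredients used beyond the quoted lemma are that $\upsilon X$ is Hausdorff and that $X$ is dense in $\upsilon X$, both of which belong to the basic theory of the Hewitt realcompactification.
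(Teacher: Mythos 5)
Your proposal is correct and follows exactly the paper's route: the paper also deduces the corollary directly from the preceding lemma applied with $T=\upsilon X$ (using that $\upsilon X$ is Hausdorff and contains $X$ densely), and your tracing of the lemma's mechanism matches the paper's proof of that lemma.
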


\begin{Rem}
We denote the set of all isolated points of space $X$ with $I(X)$. If $X$ is open in $T$, then $I(X)\subseteq I(T)$ and if $X$ is dense in $T$, then $I(T)\subseteq I(X)$. Hence, if $X$ is an open dense set in $T$, then $I(X)=I(T)$.	
\end{Rem}
	
\begin{thm}\label{t2}
Suppose that $T$ is a first countable, Hausdorff and submaximal space and $X$ is a dense set in $T$ and $I(X)=\emptyset$. Then $X=T$.  	
\end{thm}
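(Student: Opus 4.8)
The plan is a proof by contradiction. Assume $X\neq T$ and fix a point $p\in T\setminus X$; the goal is to produce a subset of $T$ that fails to be locally closed, contradicting the fact (Theorem 4.2 of \cite{D}, recalled at the start of this section) that every subset of a submaximal space is locally closed. Two reductions come first. Since $T$ is submaximal and $X$ is dense, $X$ is open in $T$; being open \emph{and} dense, $I(X)=I(T)$ by the Remark just above, so $I(T)=\emptyset$, i.e.\ $T$ is crowded. Moreover $T$ is $T_1$, hence every finite subset of $T$ is closed; together with crowdedness this forces every nonempty open subset of $T$ to be infinite (a finite open neighbourhood of a point would, by $T_1$-ness, make that point isolated).

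\emph{Building a fan.} Let $\{U_n\}_{n\geq1}$ be a decreasing open neighbourhood base at $p$. Since each $U_n$ is infinite, choose pairwise distinct $q_n\in U_n$ with $q_n\neq p$; then $q_n\to p$. The set $\{q_m:m\geq1\}\cup\{p\}$ is a convergent sequence together with its limit, hence compact, hence closed in the Hausdorff space $T$; deleting the isolated point $q_n$ leaves it closed, so $W_n:=T\setminus\bigl(\{q_m:m\neq n\}\cup\{p\}\bigr)$ is an open neighbourhood of $q_n$ meeting $\{q_m:m\geq1\}$ only in $q_n$. Since $T$ is crowded, $q_n$ is a limit point of $W_n$, so by first countability one can pick a sequence $(a_{n,j})_{j\geq1}$ in $W_n\setminus\{q_n\}$ with $a_{n,j}\to q_n$; by the choice of $W_n$, $a_{n,j}\neq q_m$ for all $m,j$. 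Set $A=\{a_{n,j}:n,j\geq1\}$.

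\emph{The contradiction.} For each $n$ we have $q_n\in\overline{\{a_{n,j}:j\geq1\}}\subseteq\overline A$, while $q_n\notin A$ and $q_n\neq p$; hence $\{q_n:n\geq1\}\subseteq\overline A\setminus(A\cup\{p\})$. Since also $p\in\overline{\{q_n:n\geq1\}}\subseteq\overline A$, we get $\overline{A\cup\{p\}}=\overline A$, so $p$ lies in the closure of $\overline{A\cup\{p\}}\setminus(A\cup\{p\})$ but not in that set. Thus $\overline{A\cup\{p\}}\setminus(A\cup\{p\})$ is not closed, and therefore, by part (e) of the Proposition above ($B$ is locally closed iff $\overline B\setminus B$ is closed), $A\cup\{p\}$ is not locally closed in $T$. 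This contradicts submaximality of $T$, so $X=T$.

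\emph{Where the difficulty lies.} The one genuine step is the fan construction, and within it the delicate point is ensuring that no limit point $q_n$ accidentally lands in $A$ — this is exactly why the $j$-th sequences are thinned into the neighbourhoods $W_n$. Note that we never need to compute $\overline A$ or to rule out spurious limit points, so regularity of $T$ is not used: only the points $q_n$ and $p$ enter the argument, mirroring the roles of the points $1/k$ and $0$ in the $\mathbb{R}$-example in the text. The argument in fact uses nothing about $X$ beyond $I(T)=\emptyset$, hence it also shows that there is no nonempty crowded first-countable Hausdorff submaximal space, from which the stated theorem is immediate.
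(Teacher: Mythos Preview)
Your proof is correct but follows a different path from the paper's.  The paper's argument is shorter and more direct: given $p\in T\setminus X$, it takes a single sequence $(x_n)$ in $X$ converging to $p$, extracts an infinite discrete subset $N$ of the compact set $\{x_n:n\geq 1\}\cup\{p\}$, and then shows that $T\setminus N$ is dense in $T$ (any nonempty open set contained in $N$ would produce an isolated point of $T$, contradicting $I(T)=\emptyset$) yet not open (the accumulation point of $N$ in that compact set lies in $T\setminus N$ but not in its interior), contradicting submaximality.  Your approach instead builds a two-level fan $a_{n,j}\to q_n\to p$ and exhibits a set $A\cup\{p\}$ for which $\overline{A\cup\{p\}}\setminus(A\cup\{p\})$ fails to be closed, invoking the criterion of Proposition~2.1(e); this deliberately mirrors Example~2.13 and makes the mechanism very explicit, at the cost of a longer construction and the extra care (your neighbourhoods $W_n$) needed to keep the $q_n$ out of $A$.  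Your closing observation is worth recording: neither argument actually uses $p\notin X$, so both in fact show that a nonempty crowded first-countable Hausdorff space cannot be submaximal---a consequence the paper does not isolate.
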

\begin{proof}
Let $p\in T-X$. Hence, there is an infinite sequence $(x_n)$ contained in $X$ which converges to $p$. Put $A=\{x_n:n=1,\cdots\}\cup\{p\}$. Let $N$ be a copy of $\Bbb N$ contained in $A$. Since $A$ is compact, then $N$ has an accumulation point in $A$, namely $x_0$. We claim that $\overline{T-N}=T$. To see this, let $a\in T$ and $a\in G\subseteq T$, which $G$ is an open set in $T$. We must show that $G\cap (T-N)\neq\emptyset$. If not, then $a\in G\subseteq N$ and hence there is an open set $H$ in $T$ such that $H\cap N=\{a\}$. It is clear that $H\cap G=\{a\}$, that is $a\in I(T)$ which is contradiction. Now one can easily see that $x_0\in T-N$ while $x_0\notin (T-N)^\circ$, that is $T-N$ is not open, so it contradicts the submaximality  of $T$.  
\end{proof}

\begin{cor}
Let $X$ be a compact submaximal space. Then $|I(X)|$ is finite if and only if $X$ is finite.
\end{cor}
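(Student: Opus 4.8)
The plan is to prove the contrapositive directions separately, with the real content lying in showing that a compact submaximal space with infinitely many isolated points cannot itself be finite only in the trivial sense — more precisely, that if $X$ is compact, submaximal, and $I(X)$ is infinite, then $X$ is infinite (which is immediate), and conversely that $X$ finite forces $|I(X)|$ finite (also immediate since $I(X)\subseteq X$). So the genuine statement to establish is: if $X$ is a compact submaximal space and $|I(X)|<\infty$, then $X$ is finite. First I would recall from the proof of Theorem~\ref{t1} the structural fact, via Theorem 4.20 of \cite{ABK}, that a countably compact submaximal space is a finite disjoint union $X=\bigcup_{i=1}^n X_i^*$ of one-point compactifications of discrete spaces $X_i$; a compact space is certainly countably compact, so this applies here.

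Next I would analyze $I(X)$ in terms of this decomposition. In $X_i^*=X_i\cup\{\sigma_i\}$, every point of the discrete part $X_i$ is isolated in $X_i^*$, and since each $X_i^*$ is clopen in $X$ (being one of finitely many disjoint open pieces), every point of $X_i$ is isolated in $X$ as well. Hence $\bigcup_{i=1}^n X_i\subseteq I(X)$. The hypothesis $|I(X)|<\infty$ therefore forces each $X_i$ to be finite. But if $X_i$ is finite and discrete, then $X_i^*=X_i\cup\{\sigma_i\}$ is finite. Consequently $X=\bigcup_{i=1}^n X_i^*$ is a finite union of finite sets, so $X$ is finite. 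The converse is trivial: if $X$ is finite then $I(X)\subseteq X$ is finite.

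The main obstacle — really the only non-routine point — is making sure the decomposition from Theorem 4.20 of \cite{ABK} gives the pieces $X_i^*$ as \emph{clopen} subsets, so that isolatedness transfers between $X_i^*$ and $X$. This is fine: a finite disjoint union into the $X_i^*$ means each $X_i^*$ is the complement of the union of the (finitely many) others, hence closed, and it is open as a member of the partition into open sets; alternatively one can invoke the Remark preceding Theorem~\ref{t2} about isolated points passing to and from open dense subspaces, applied to the clopen — in particular open and, within its closure, dense — piece $X_i^*$. I would also remark in passing that one need not even invoke $|I(X)|<\infty$ versus "$I(X)$ infinite" asymmetrically: the equivalence is clean because $I(X)=\bigcup_{i=1}^n X_i$ up to at most finitely many of the $\sigma_i$ (a $\sigma_i$ is isolated precisely when $X_i$ is empty), so $|I(X)|$ and $\sum_i |X_i|$ differ by at most $n$, and $|X|=\sum_i|X_i|+n$; thus $|I(X)|<\infty \iff |X|<\infty$.
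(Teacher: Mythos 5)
Your proposal is correct, but it takes a genuinely different route from the paper's. The paper argues directly: assuming $|I(X)|$ is finite, if $X-I(X)$ were infinite one repeats the argument of Theorem~\ref{t2} (the same device as in Proposition~\ref{p44}): extract a copy $N$ of $\Bbb N$ inside $X-I(X)$ together with an accumulation point, observe that $X-N$ is dense (an open set contained in $N$ would produce an isolated point outside $I(X)$) but not open, contradicting submaximality; hence $X-I(X)$ is finite and so is $X$. You instead invoke the structural theorem of \cite{ABK} already used in Theorem~\ref{t1}: a compact (hence countably compact) submaximal space is a finite topological sum $\bigcup_{i=1}^n X_i^*$ of one-point compactifications of discrete spaces, and then you simply count: the points of each $X_i$ are isolated in the clopen piece $X_i^*$, hence in $X$, so $|I(X)|<\aleph_0$ forces every $X_i$, and therefore $X$, to be finite. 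Your route buys a sharper quantitative statement ($|X|=\sum_i|X_i|+n$, and $I(X)$ coincides with $\bigcup_i X_i$ up to at most $n$ points) and avoids the somewhat delicate adaptation of Theorem~\ref{t2} (choosing the copy of $\Bbb N$ away from $I(X)$ and producing its accumulation point); its cost is that it imports the full strength of Theorem 4.20 of \cite{ABK}, whose standing hypotheses (Hausdorff/Tychonoff) are not stated in the corollary --- though the paper's own argument implicitly needs comparable separation to extract $N$ and an accumulation point, so the two proofs are on equal footing in that respect. One small slip in your closing aside: $\sigma_i$ is isolated in $X_i^*$ precisely when $X_i$ is finite, not when it is empty; this is harmless, since in any case the $\sigma_i$ contribute at most $n$ points and your counting argument is unaffected.
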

\begin{proof}
Suppose that $|I(X)|$ is finite. If $X-I(X)$ is infinite, then similar to the proof of the Theorem \ref{t2}, we can show that $X$ is not submaximal, which is a contradiction. Hence $X-I(X)$ is finite and so $X$ is finite.
\end{proof}

\begin{cor}
If $\upsilon X$ is submaximal and  first countable and $I(X)=\emptyset$, then $X=\upsilon X$ i.e., $X$ is realcompact.  
\end{cor}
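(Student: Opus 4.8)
The plan is to reduce the statement directly to Theorem~\ref{t2}. Recall that $\upsilon X$ is a Tychonoff space in which $X$ sits as a dense subspace, and that $X$ being realcompact is equivalent to $\upsilon X = X$. So the whole corollary amounts to showing $\upsilon X = X$ under the stated hypotheses.

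First I would check that the three hypotheses of Theorem~\ref{t2} are satisfied with $T = \upsilon X$ and the dense subset being $X$ itself. Submaximality of $T$ is assumed. First countability of $T$ is assumed. Hausdorffness of $T$ is automatic, since $\upsilon X$ is Tychonoff (hence Hausdorff) for any Tychonoff $X$. Density of $X$ in $\upsilon X$ is a standard fact about the Hewitt realcompactification. The one remaining point is that $I(X) = \emptyset$ implies $I(T) = \emptyset$ where $T = \upsilon X$: since $X$ is dense in $\upsilon X$, the Remark preceding Theorem~\ref{t2} gives $I(\upsilon X) \subseteq I(X) = \emptyset$. (Note that Theorem~\ref{t2} is actually stated with the hypothesis $I(X) = \emptyset$ on the dense set, not on $T$, so in fact I can feed it $I(X) = \emptyset$ directly; the Remark is only needed if one prefers to phrase things in terms of $T$. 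Either way the hypothesis is met.)

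Having verified all hypotheses, Theorem~\ref{t2} yields $X = \upsilon X$. By the standard characterization recalled in the introduction ($\upsilon X = X$ if and only if $X$ is realcompact), this says $X$ is realcompact, which is the assertion. So the proof is essentially a one-line invocation once the hypothesis-matching is done.

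I do not anticipate a serious obstacle here; the only place that requires a moment's care is confirming that first countability is genuinely inherited in the form Theorem~\ref{t2} wants (it is used there to extract a convergent sequence to a point of $T - X$), and that the ambient space in the theorem may be taken to be $\upsilon X$ with no additional regularity assumptions beyond Hausdorff — which is fine since $\upsilon X$ is even Tychonoff. One should also note implicitly that the case $X = \upsilon X$ already (i.e.\ $T - X = \emptyset$) is trivially covered by Theorem~\ref{t2}'s argument, so no separate case analysis is needed.

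\begin{proof}
Since $X$ is a Tychonoff space, $\upsilon X$ is Tychonoff and hence Hausdorff, and $X$ is dense in $\upsilon X$. By hypothesis $\upsilon X$ is submaximal and first countable, and $I(X) = \emptyset$. Thus $T = \upsilon X$ together with the dense subset $X$ satisfies all the hypotheses of Theorem~\ref{t2}, and we conclude $X = \upsilon X$. By the characterization recalled in the introduction, this means precisely that $X$ is realcompact.
\end{proof}
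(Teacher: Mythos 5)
Your proposal is correct and follows exactly the paper's route: the paper also proves this corollary by a direct appeal to Theorem~\ref{t2} with $T=\upsilon X$, and your verification of the hypotheses (Hausdorffness and density coming from the standard properties of the Hewitt realcompactification) is just the explicit hypothesis-matching the paper leaves implicit.
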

\begin{proof}
By Theorem \ref{t2}	is clear.
\end{proof}

A space $X$ is called a pseudo-finite (resp. pseudo-discrete) space if every compact subspace of $X$ is finite (resp. has finite interior). We say that $X$ is real pseudo-finite, if $\upsilon X$ is pseudo-finite. Every  pseudo-finite space is pseudo-discrete, but not conversely. For example, we consider the space $\Bbb Q$ of rational numbers.

\begin{exam}
The space $\Sigma$ is pseudo-finite. To see this, let $F\subseteq\Sigma$ be a compact subspace. If $\sigma\notin F$, then $F$ is  discrete and hence it is finite. If $\sigma\in F$ and $F$ is infinite, then assume that $A=\{x_1, x_2,\cdots\}$ be an infinite subset of $F$ and put $G=(\Bbb N-\{x_1\})\cup\{\sigma\}$. Now ${\mathcal C}=\{\{x_n\}:n=1,\cdots\}\cup\{G\}$ be an open cover for $F$ which has not a finite subcover. This is a contradiction.  
\end{exam}

\begin{prop}\label{p44}
Every submaximal Hausdorff space is pseudo-finite.
\end{prop}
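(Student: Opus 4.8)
The plan is to prove directly that every compact subspace of $X$ is finite. So fix a compact subspace $F$ of $X$. Since every subspace of a submaximal space is submaximal (Theorem~1.1 of \cite{D}) and every subspace of a Hausdorff space is Hausdorff, $F$ is itself a compact, Hausdorff and submaximal space; in particular $F$ is countably compact.

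The next step is to apply to $F$ the structure theorem for countably compact submaximal spaces already used in the proof of Theorem~\ref{t1}, namely Theorem~4.20 of \cite{ABK}: this expresses $F$ as a finite disjoint union $F=\bigcup_{i=1}^{n}X_i^{*}$ in which each $X_i^{*}=X_i\cup\{\sigma_i\}$ is the one-point compactification of a discrete space $X_i$, and in which each summand $X_i^{*}$ is clopen (hence closed, hence a compact subspace of $X$). Consequently $F$ is finite if and only if every $X_i$ is finite, and the whole statement reduces to showing that no factor $X_i$ can be infinite.

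Ruling out an infinite factor is the main obstacle. Assume for contradiction that, say, $X_1$ is infinite, and fix a countably infinite subset $\{x_1,x_2,\dots\}\subseteq X_1$. The route I would follow is to imitate the Example on the space $\Sigma$: starting from the singletons $\{x_k\}$ (which are open, since $X_1$ carries the discrete topology) together with a carefully chosen open neighbourhood of the point at infinity $\sigma_1$, build an open cover of $X_1^{*}$ admitting no finite subcover, contradicting the compactness of $X_1^{*}$. A possible alternative is to split $X_1$ into two disjoint infinite pieces $A$ and $B$, compute the closures $\overline{A}$ and $\overline{B}$ inside $X$ — using that $X_1^{*}$ is clopen in the closed set $F$ — and then invoke either the Proposition that $\overline{A}\cap\overline{B}$ is discrete in a submaximal space or the Corollary that a compact submaximal space with only finitely many isolated points is finite. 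In each version the delicate point, and the part that has to be handled with care, is to squeeze a genuine contradiction out of the single extra point $\sigma_1$; once this is done, every $X_i$ is finite, $F$ is finite, and $X$ is pseudo-finite.
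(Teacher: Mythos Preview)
Your reduction to the compact subspace $F$ and the invocation of Theorem~4.20 of \cite{ABK} are sound, but the ``delicate point'' you flag cannot be resolved: the statement as written is false. The Introduction of the paper already records that the one-point compactification of any discrete space is submaximal; taking $X$ to be the one-point compactification of $\Bbb N$ gives a Hausdorff submaximal space which is its own infinite compact subspace, hence not pseudo-finite. In your decomposition this is exactly the case $n=1$, $X_1=\Bbb N$, $X_1^{*}=X$, and no contradiction is available.

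All three of your suggested endgames collapse against this example. The open-cover route imitates the $\Sigma$ Example, but $\Sigma=\Bbb N\cup\{\sigma\}\subseteq\beta\Bbb N$ is \emph{not} the one-point compactification and is not compact; the genuine one-point compactification $X_1^{*}$ \emph{is} compact, so every open cover admits a finite subcover and nothing is gained. Splitting $X_1$ into infinite halves $A,B$ yields $\overline{A}\cap\overline{B}=\{\sigma_1\}$, which is already discrete, so the Proposition on $\overline{A}\cap\overline{B}$ gives no contradiction. And the Corollary on finitely many isolated points does not apply, since every point of $X_1$ is isolated in $X_1^{*}$. For comparison, the paper's own argument follows Theorem~\ref{t2} and asserts that $X\setminus N$ is dense in $X$; but the density step in Theorem~\ref{t2} relied on the hypothesis $I(X)=\emptyset$, and in the counterexample any singleton $\{n\}$ with $n\in N$ is open and disjoint from $X\setminus N$, so that claim fails as well.
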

\begin{proof}
Let $X$ be a submaximal Hausdorff space and let $Y$ be a compact subspace of $X$. If $Y$ be infinite, then contains a copy of $\Bbb N$, namely $N$. Suppose that $x_0\in N'$, where $x_0\in Y$. Now similar to the proof of the Theorem \ref{t2}, it is observe that $X-N$ is dense in $X$ but it is not an open set in $X$, so it contradicts the submaximality of $X$.	
\end{proof}
	
A  pseudo-finite Tychonoff space may be not submaximal. For example, $\Bbb Q$ is  pseudo-finite but not submaximal. See also the following example.

\begin{exam}
We consider the space $X$ in Example 2 of \cite{RW}. This example shows that the Tychonoff space $X$ is an infinite countably compact subset of $\beta\Bbb N$ 	which is also pseudo-finite. We claim that $X$ is not submaximal. Otherwise, by Theorem 4.20 of \cite{ABK}, $X$ must be a compact space. In this case, since $X$ is pseudo-finite, it must be finite, which is not true. 
\end{exam}	

$C_K(X)$ (resp. $C_\psi(X)$) is the family of all $f\in C(X)$ with compact (resp. pseudocompact) support. Also $C_{\infty}(X)$ is a subcollection of $C^*(X)$ consisting of all functions vanishing at infinity, that is all $f\in C(X)$ for which $\{x\in X:|f(x)|\geq\frac{1}{n}\}$ is compact for every $n\in\Bbb N$. For the convenience of readers, some special ideals are listed below.

a) $C_F(X)=O^{\beta X\setminus I(X)}$ is the intersection of all essential ideals in $C(X)$. Recall that $C_F(X)$ is the socle of $C(X)$ and it is well known  that $C_F(X)=\{f\in C(X):coz(f)~\mbox{is finite}\}$, see Proposition 3.3 of \cite{KR}.

b) $C_K(X)=O^{\beta X\setminus X}$ is the intersection of all free ideals in $C(X)$, see 7E of \cite{GJ}.

c) $C_\infty(X)=M^{*\beta X\setminus X}$ is the intersection of all free maximal ideals in $C^*(X)$.

d) $C_\psi(X)=M^{\beta X\setminus \upsilon X}$ is the intersection of all hyper-real maximal ideals in $C(X)$.

e) $I_\psi(X)=M^{\beta X\setminus X}$ is the intersection of all free maximal ideals in $C(X)$.\\

One can easily see that $C_F(X)\subseteq C_K(X)\subseteq I_{\psi}(X)$. In Theorem 8.19 of \cite{GJ}, it is shown that if $X$ is realcompact, then $C_K(X)=I_{\psi}(X)$. In part (a) of Theorem 4.5 of \cite{Az}, it is proved that $X$ is a pseudo-discrete if and only if $C_F(X)=C_K(X)$.

In \cite{JM} a space $X$ is called:\\
a) a $\mu$-compact if $C_K(X)=I_{\psi}(X)$.\\
b) an $\eta$-compact if $C_{\psi}(X)=I_{\psi}(X)$.\\
c) a $\psi$-compact if $C_K(X)=C_{\psi}(X)$.\\
d) an $\infty$-compact if $C_K(X)=C_{\infty}(X)$.

In \cite{M} a space $X$ is called an $i$-compact if $C_{\infty}(X)=I_{\psi}(X)$.

\begin{prop}
If $\upsilon X$ is a submaximal space, then $X$ is a pseudo-finite $\mu$-compact space.	
\end{prop}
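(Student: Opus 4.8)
The plan is to split the claim into its two parts — pseudo-finiteness of $X$ and $\mu$-compactness of $X$ — and handle them separately, both by pulling information down from $\upsilon X$ to $X$.

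First I would establish that $X$ is pseudo-finite. The submaximality of $\upsilon X$ passes to the dense subspace $X$, since (as recalled in the excerpt) every subspace of a submaximal space is submaximal. Now $\upsilon X$, being a realcompactification, is Tychonoff, hence so is $X$; in particular $X$ is Hausdorff. Therefore Proposition \ref{p44} applies directly to $X$: every submaximal Hausdorff space is pseudo-finite, so $X$ is pseudo-finite.

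Next I would show $X$ is $\mu$-compact, i.e. $C_K(X)=I_\psi(X)$. Since $C_K(X)\subseteq I_\psi(X)$ always holds, only the reverse inclusion needs work. I would argue that $X$ is in fact realcompact, after which Theorem 8.19 of \cite{GJ} (quoted in the excerpt: $X$ realcompact implies $C_K(X)=I_\psi(X)$) finishes the job. To get realcompactness, note that $\upsilon X$ is submaximal and Hausdorff, hence pseudo-finite by Proposition \ref{p44}. If $\upsilon X\neq X$, pick $p\in\upsilon X\setminus X$; by realcompactness of $\upsilon X$ and its construction, there is $f\in C(\upsilon X)$ with no zero whose extension forces a countably compact (indeed compact, in a submaximal space, by Theorem 4.20 of \cite{ABK}) infinite subset of $\upsilon X$ — contradicting that $\upsilon X$ is pseudo-finite, since an infinite compact set cannot sit inside a pseudo-finite space. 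Thus $\upsilon X=X$, so $X$ is realcompact, and $C_K(X)=I_\psi(X)$.

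The main obstacle I anticipate is the second step: making precise the passage from ``$\upsilon X$ is submaximal'' to ``$\upsilon X$ has no infinite compact subsets, hence contains no points outside $X$.'' One must be careful that pseudo-finiteness of $\upsilon X$ genuinely obstructs $\upsilon X\setminus X$ being nonempty — the cleanest route is: if $\upsilon X\neq X$ then, as in the proof of Theme \ref{t2} / Proposition \ref{p44}, a sequence in $X$ converging in $\upsilon X$ to a point of $\upsilon X\setminus X$ yields an infinite compact (convergent-sequence-plus-limit) subset of $\upsilon X$, contradicting pseudo-finiteness of $\upsilon X$. This shows $\upsilon X=X$; alternatively one may invoke the earlier corollary that $\upsilon X$ submaximal and $X$ countably compact forces realcompactness, but here the pseudo-finite argument is self-contained and avoids extra hypotheses.
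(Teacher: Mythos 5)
The first half of your proposal is correct and is essentially the paper's own first step: $X$ is a (Tychonoff, hence Hausdorff) subspace of the submaximal space $\upsilon X$, so it is itself submaximal, and Proposition \ref{p44} gives pseudo-finiteness of $X$ (equivalently, one applies Proposition \ref{p44} to $\upsilon X$ and notes that pseudo-finiteness is inherited by $X$).

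The second half has a genuine gap. You reduce $\mu$-compactness to realcompactness of $X$, and the key step is the claim that a point $p\in\upsilon X\setminus X$ would be the limit of a sequence of points of $X$, producing an infinite compact (sequence-plus-limit) subset of the pseudo-finite space $\upsilon X$. Nothing in the hypotheses supplies such a sequence: this is precisely where Theorem \ref{t2} and its corollary need first countability, which is absent here. In general a point of $\upsilon X\setminus X$ need not be the limit of any sequence from $X$ (for $X=W(\omega_1)$, the paper's own example, $\upsilon X=W(\omega_1+1)$ and no sequence in $X$ converges to $\omega_1$), and the mere existence of $p$ does not contradict pseudo-finiteness of $\upsilon X$, since $p$ need not lie in any infinite compact subset of $\upsilon X$. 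Indeed, if your argument were valid it would prove that $\upsilon X$ submaximal implies $X$ realcompact, which the paper explicitly records as an open question immediately after this proposition; so the realcompactness route cannot be justified by these means. The paper bypasses realcompactness entirely: having shown that $\upsilon X$ is pseudo-finite (i.e., $X$ is real pseudo-finite), it invokes Theorem 3.8 of \cite{KR} to conclude $C_K(X)=I_\psi(X)$ directly. Your proof is missing either that citation or a substitute argument linking pseudo-finiteness of $\upsilon X$ to the equality $C_K(X)=I_\psi(X)$ without assuming $\upsilon X=X$.
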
	
\begin{proof}
By Corollary \ref{p44}, $X$ is 	pseudo-finite. Since $X$ is real  pseudo-finite, by Theorem 3.8 of \cite{KR}, we infer that $C_K(X)=I_\psi(X)$, that is, $X$ is a $\mu$-compact space.
\end{proof}	
	
The converse of the above proposition is not true, in general. For example, $\Bbb Q$ is a pseudo-finite $\mu$-compact space but $\upsilon\Bbb Q=\Bbb Q$ is not submaximal.\\
By part (b) of Theorem 4.5 of \cite{Az}, the proof of the result is obvious.
\begin{cor}
Let $X$ be a submaximal Hausdorff space and $|I(X)|<\aleph_0$. Then $C_K(X)=C_{\infty}(X)$.	
\end{cor}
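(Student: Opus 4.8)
The plan is to squeeze $C_K(X)$ inside the inclusion chain $C_F(X)\subseteq C_K(X)\subseteq C_\infty(X)$ and to show that its two ends coincide, so that all three ideals are equal.

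First I would convert the hypotheses into the language needed by the cited theorem. Since $X$ is submaximal and Hausdorff, Proposition \ref{p44} shows that $X$ is pseudo-finite, and every pseudo-finite space is pseudo-discrete (the remark made immediately after the definition of these classes). Thus $X$ is a pseudo-discrete Hausdorff space with $|I(X)|<\aleph_0$. As a by-product, $C_F(X)=C_K(X)$: indeed, for $f\in C_K(X)$ the support $\overline{coz(f)}$ is compact, hence finite because $X$ is pseudo-finite, so $coz(f)$ is finite and $f\in C_F(X)$; the reverse inclusion $C_F(X)\subseteq C_K(X)$ is already recorded in the excerpt. (Alternatively, $C_F(X)=C_K(X)$ is immediate from part (a) of Theorem 4.5 of \cite{Az}, since $X$ is pseudo-discrete.)

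Next I would invoke part (b) of Theorem 4.5 of \cite{Az}: for a pseudo-discrete space with only finitely many isolated points one has $C_F(X)=C_\infty(X)$. Combining this with the equality $C_F(X)=C_K(X)$ from the previous step gives $C_K(X)=C_F(X)=C_\infty(X)$, which is exactly the assertion that $X$ is $\infty$-compact, as claimed.

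The argument is short and there is no genuine obstacle; the only thing worth checking carefully is that ``submaximal $+$ Hausdorff'' really delivers the pseudo-discreteness demanded by Theorem 4.5(b) of \cite{Az} — this is precisely where Proposition \ref{p44} and the implication pseudo-finite $\Rightarrow$ pseudo-discrete enter — while the cardinality condition $|I(X)|<\aleph_0$ is taken verbatim from the hypothesis. All remaining content is packaged in the quoted theorem of \cite{Az}.
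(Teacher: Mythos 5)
Your proposal is correct and follows essentially the same route as the paper, which simply invokes Proposition \ref{p44} (submaximal Hausdorff $\Rightarrow$ pseudo-finite, hence pseudo-discrete) together with part (b) of Theorem 4.5 of \cite{Az} and declares the result obvious; you merely spell out the intermediate equality $C_F(X)=C_K(X)$ that the paper leaves implicit. No gaps.
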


\begin{prop}
Let $\upsilon X$ be a submaximal space and $|I(X)|<\aleph_0$. Then the following statement are hold.\\
a) $X$ is a $i$-compact.\\
b) $X$ is a $\infty$-compact.\\
c) $X$ is compact if and only if it is a realcompact space.
\end{prop}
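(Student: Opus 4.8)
The plan is to read off (a) and (b) almost immediately from the two results stated just before the proposition, and to put the real effort into the nontrivial half of (c).

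First the reductions. Since $\upsilon X$ is formed, $X$ is Tychonoff, hence Hausdorff; and being a subspace of the submaximal space $\upsilon X$, the space $X$ is itself submaximal (every subspace of a submaximal space is submaximal). So $X$ is a submaximal Hausdorff space with $|I(X)|<\aleph_0$, and the corollary asserting $C_K(X)=C_\infty(X)$ for such spaces says precisely that $X$ is $\infty$-compact; this is (b). The proposition preceding ours gives that $\upsilon X$ submaximal forces $X$ to be $\mu$-compact, i.e. $C_K(X)=I_\psi(X)$; chaining this with (b) yields $C_\infty(X)=C_K(X)=I_\psi(X)$, which is $i$-compactness; this is (a).

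In (c) the implication ``$X$ compact $\Rightarrow$ $X$ realcompact'' is standard. For the converse I would argue as follows. Assume $X$ is realcompact, so $\upsilon X=X$ is a submaximal Hausdorff space with $|I(X)|<\aleph_0$, hence pseudo-finite by Proposition~\ref{p44}. As $I(X)$ is finite it is clopen, so $X=I(X)\sqcup Y$ with $Y$ a clopen subspace which inherits realcompactness, submaximality and the Hausdorff property and which has no isolated point. It suffices to prove $Y=\emptyset$, since then $X=I(X)$ is finite and hence compact (in fact this already shows that, under these hypotheses, ``compact'' and ``finite'' coincide). Suppose $Y\neq\emptyset$. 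By the corollary after Theorem~\ref{t1} applied to $Y$ (legitimate since $\upsilon Y=Y$ is submaximal), $Y$ is compact if and only if it is pseudocompact; but a compact crowded submaximal space with finitely many isolated points is finite by the corollary after Theorem~\ref{t2}, and a finite crowded space is empty, so $Y$ is neither compact nor pseudocompact. From non-pseudocompactness choose $g\in C^*(Y)$ with $0<g\le 1$ and $\inf_Y g=0$, then $y_n\in Y$ with $g(y_n)$ strictly decreasing to $0$; the set $N=\{y_n:n\in\Bbb N\}$ has no cluster point in $Y$ (continuity and positivity of $g$), hence is closed and discrete, and the Tychonoff property supplies a locally finite family of cozero sets $W_n\ni y_n$ with $W_n\cap N=\{y_n\}$, so that $N$ is a $C^*$- (indeed $C$-) embedded copy of $\Bbb N$ in $Y$ all of whose points are non-isolated.

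The crux — and the step I expect to be the genuine obstacle — is to convert this configuration into a contradiction, equivalently to prove that no nonempty crowded submaximal realcompact Tychonoff space exists. One attempt is to pick $z_n\in W_n\setminus\{y_n\}$ and play the disjoint sets $N$ and $\{z_n\}$ against the earlier fact that $\overline A\cap\overline B$ is discrete for disjoint $A,B$ in a submaximal space, together with pseudo-finiteness, which forbids any nontrivial convergent sequence in $Y$; another is to show $Y$ is countably compact or $\sigma$-compact, whereupon the corollary after Theorem~\ref{t1} upgrades this to compactness and the corollary after Theorem~\ref{t2} forces $Y$ finite, hence empty. Either way the decisive ingredient is drawn from the structure theory of submaximal spaces in the spirit of~\cite{ABK} and~\cite{AC}; once $Y=\emptyset$ is established, $X$ is finite and (c) follows, with the remaining equivalences among compactness, realcompactness and finiteness then being formal.
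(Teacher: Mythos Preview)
Your treatment of (a), (b), and the forward direction of (c) is correct and matches the paper, which also dispatches these as immediate from the preceding corollary and proposition.

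For the converse in (c), however, there is a genuine gap. You yourself flag it: you reduce to showing that a nonempty crowded realcompact submaximal Tychonoff space cannot exist, sketch two possible lines of attack, and then defer the decisive step to ``the structure theory of submaximal spaces in the spirit of \cite{ABK} and \cite{AC}.'' Neither sketch is completed, and in fact this topological reduction is not obviously easier than the original statement; nothing in the paper or its references supplies the missing ingredient in the form you need.

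The paper bypasses this topological difficulty entirely with a short algebraic argument. Assume $X$ is realcompact. By (a), $C_\infty(X)=I_\psi(X)$. Realcompactness means $\upsilon X=X$, so $\beta X\setminus\upsilon X=\beta X\setminus X$ and hence $C_\psi(X)=M^{\beta X\setminus\upsilon X}=M^{\beta X\setminus X}=I_\psi(X)$; combining, $C_\infty(X)=C_\psi(X)$. Then Corollary~2.5 of \cite{AS} (Azarpanah--Soundararajan) yields compactness directly. The whole point of building up parts (a) and (b) and the ideal identifications $C_F\subseteq C_K\subseteq C_\infty, I_\psi, C_\psi$ beforehand is precisely so that (c) collapses to this one citation; your topological detour discards that setup.
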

\begin{proof}
The proof of parts (a) and (b) and implication ($\Rightarrow$) of (c) is clear. For the other side of (c), since $X$ is $i$-compact, then $C_{\infty}(X)=I_{\psi}(X)$. Since $X$ is realcompact, we infer that $C_{\psi}(X)=I_{\psi}(X)=C_K(X)$, that is, $X$ is $\eta$-compact and $\psi$-compact. Hence $C_{\infty}(X)=C_{\psi}(X)$  and by Corollary 2.5 of \cite{AS} we conclude that $X$ is compact.	
\end{proof}

Similar to Theorem \ref{t1}, the question arises that if $\upsilon X$ is a submaximal space, is $X$ realcompact?. So far, we have not been able to answer this question, in general. Therefor we express it as a question.\\

Question:  If $\upsilon X$ is a submaximal space, is $X$ realcompact?\\
 
The space $\Bbb R$ is a realcompact space which is not submaximal. See the following example for an example of a space that is submaximal but not realcompact. This example, also shows that $X$ may be submaximal but $\upsilon X$ may not be submaximal.

\begin{exam}
We consider the space ${\bold\Psi}$ in 5I of \cite{GJ}. By 5I. 5, the space ${\bold\Psi}$ is pseudocompact which it is not a realcompact space. We now show that it is submaximal. Suppose that $\overline{B}={\bold\Psi}$. Since $\Bbb N\subseteq B$, we infer that $B=\Bbb N\cup A$, where $A=\{\omega_E:E\in{\mathcal F\subseteq{\mathcal E}}\}$. Assume that $\omega_E\in B$, which $E\in{\mathcal F}$. Put $G=D\cup\{\omega_E\}$, where $D$ containing all but a finite number of points of $E$. Then $G$ is an open set contains $\omega_E$ and $G\subseteq B$. This implies that $\omega _E\in B^\circ$, that is  $B$ open and consequence that ${\bold\Psi}$ is submaximal. Furthermore, since ${\bold\Psi}$ is not compact, by Theorem \ref{t1}, $\beta{\bold\Psi}$  is not submaximal. Now $\upsilon{\bold\Psi}=\beta{\bold\Psi}$ shows that $\upsilon{\bold\Psi}$ is not a submaximal space.
\end{exam}

It is possible that  $X$ is a locally compact and pseudocompact space but $\upsilon X$ is not submaximal. See the next example.  

\begin{exam}
We consider the space $\bold W=W(\omega_1)=\{\sigma:\sigma<\omega_1\}$ of all countable ordinals and $\bold W^*=W(\omega_1+1)=\{\sigma:\sigma\leq\omega_1\}$, where $\omega_1$ denoting the first uncountable ordinal. It is well known that $\bold W$ is a pseudocompact locally compact space which neither compact nor realcompact. Clearly, $\upsilon \bold W=\beta \bold W=\bold W^*$. Since $\bold W$ is not compact, then by Theorem \ref{t1},  $\upsilon \bold W$
 is not submaximal. For a direct proof let $A=\{\omega_0, 2\omega_0,3\omega_0.\cdots\}$, where $\omega_0$ is the first infinite ordinal and let $B=\bold W^*-A$. One can easily check that $B$ is dense in $\upsilon\bold W$. But $B$ is not open in $\upsilon\bold W$, for $\omega_0^2\in B$ while $\omega_0^2\notin B^\circ$. 
\end{exam}

\section{locally indiscrete spaces}

A space $X$ is called locally indiscrete if  every open set is closed or equivalently if every closed set is open. Every discrete space is locally indiscrete. For another nontrivial example, let $R$ be a principal ideal ring. Then the space ${\rm Min}(R)$ with Zariski topology is a locally indiscrete space.  

\begin{prop}
For a topological space $X$ the following conditions are equivalent.\\
a) $X$ is locally indiscrete.\\
b) Every subset of $X$ is preopen.\\
c) Every singleton in $X$ is preopen.\\
d) Every closed subset of $X$ is preopen.\\
e) Every locally closed subset of $X$ is open.\\
f) Every locally closed subset of $X$ is closed.\\
g) The closure of every locally closed set is open.\\
h) Every dense open subset of $X$ is regular open.	
\end{prop}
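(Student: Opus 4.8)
The plan is to split the eight conditions into two short cycles meeting at (a) and then attach (h) to (a) by a separate two-way argument. The first cycle is $(a)\Rightarrow(b)\Rightarrow(c)\Rightarrow(d)\Rightarrow(a)$, run through the language of preopen sets; the second consists of $(a)\Rightarrow(e)$, $(a)\Rightarrow(f)$, $(a)\Rightarrow(g)$ together with $(e)\Rightarrow(a)$, $(f)\Rightarrow(a)$, $(g)\Rightarrow(a)$, run through the decomposition of a locally closed set as $G\cap B$ with $G$ open and $B$ closed (Proposition 2.1(b)) and the fact, recorded earlier, that every open set and every closed set is locally closed.

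For the first cycle: if $X$ is locally indiscrete then for any $A\subseteq X$ the closed set $\overline{A}$ is open, so $A\subseteq\overline{A}=(\overline{A})^\circ$ and $A$ is preopen, giving $(a)\Rightarrow(b)$; $(b)\Rightarrow(c)$ is immediate; $(c)\Rightarrow(d)$ uses that an arbitrary union of preopen sets is preopen (a one-line closure/interior computation) and that a closed set is the union of its singletons; and $(d)\Rightarrow(a)$ holds because a closed set $F$ which is preopen satisfies $F\subseteq(\overline{F})^\circ=F^\circ$, hence $F=F^\circ$ is open, which is exactly local indiscreteness.

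For the second cycle: if $A=G\cap B$ with $G$ open and $B$ closed, then in a locally indiscrete space $G$ is also closed and $B$ is also open, so $A$ is at once open and closed; this yields $(a)\Rightarrow(e)$ and $(a)\Rightarrow(f)$, while $(a)\Rightarrow(g)$ is quicker still since $\overline{A}$ is closed, hence open. Each of $(e),(f),(g)$ implies $(a)$ by feeding the hypothesis an arbitrary closed (respectively open, respectively closed) set, which is locally closed. For (h): $(a)\Rightarrow(h)$ because a dense open set in a locally indiscrete space is also closed and dense, hence equals $X$, which is regular open. Conversely $(h)\Rightarrow(a)$: given an open $G$, put $D=G\cup(X\setminus\overline{G})$; this set is open, and it is dense (any nonempty open $U$ either meets $G$ or is contained in $X\setminus\overline{G}$), so by (h) it is regular open, and being dense it equals $(\overline{D})^\circ=X$, which forces $\overline{G}\subseteq G$, i.e. $G$ is closed.

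I expect no step to present real difficulty; the two points needing a line of verification are the observation that arbitrary unions of preopen sets are preopen (used in $(c)\Rightarrow(d)$) and the construction of the witnessing dense open set $G\cup(X\setminus\overline{G})$ in $(h)\Rightarrow(a)$. I would therefore present (h) last, once the two routine cycles through (a)--(g) are in place.
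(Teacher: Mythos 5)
Your proposal is correct and matches the paper's approach: the paper declares all implications except $(h)\Rightarrow(a)$ straightforward (and your routine arguments for them are fine), and for $(h)\Rightarrow(a)$ your witness $G\cup(X\setminus\overline{G})$ is exactly the paper's set $A\cup(X-A)^\circ$, with the same conclusion that regular openness of this dense open set forces it to equal $X$ and hence $\overline{G}\subseteq G$.
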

\begin{proof}
All implications are straightforward. We only show $(h\Rightarrow a)$. Suppose that $A$ is an open set in $X$ and consider $B=A\cup(X-A)^\circ$. Then $B$ is open in $X$. Furthermore, $\overline{B}=\overline{A}\cup\overline{(X-A)^\circ}=\overline{A}\cup\overline{(X-\overline{A})}=\overline{A}\cup (X-\overline{A}^\circ)=X$. Hence, by hypothesis $B$ is regular open and therefore we have $X=\overline{B}^\circ\subseteq B$. It implies that $X=A\cup(X-\overline{A})$ and thus $\overline{A}\cap(X-A)=\emptyset$. This consequence that $\overline{A}\subseteq A$, that is, $A$ is closed and we are done. 
\end{proof}
\begin{prop}
For a topological $T_1$-space $X$ the following conditions are equivalent.\\
a) $X$ is discrete.\\
b) $X$ is locally indiscrete.\\
c) Every open set is regular open.
\end{prop}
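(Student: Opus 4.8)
The plan is to run the cycle of implications $(a)\Rightarrow(c)\Rightarrow(b)\Rightarrow(a)$, arranged so that the $T_1$ hypothesis is invoked exactly once, in the very last step; conditions (a) and (b) are equivalent for arbitrary spaces only in one direction, so the $T_1$ assumption must do work somewhere, and this arrangement isolates it cleanly.

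For $(a)\Rightarrow(c)$ I would simply note that in a discrete space every subset is clopen. Hence if $A$ is open then $\overline{A}=A$, so $\overline{A}^{\circ}=A^{\circ}=A$, i.e.\ $A$ is regular open. This step is immediate and needs no separation axiom.

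For $(c)\Rightarrow(b)$ the key observation is that condition (c) is formally stronger than condition (h) of the preceding proposition: a dense open subset of $X$ is in particular an open subset, hence regular open by (c). So hypothesis (h) is satisfied, and the preceding proposition gives that $X$ is locally indiscrete. (Should one prefer to avoid the appeal, the direct argument is the same as in the proof of $(h\Rightarrow a)$ there: for $A$ open, the set $B=A\cup(X-\overline{A})$ is open and dense, so $B=\overline{B}^{\circ}=X$, whence $\overline{A}\cap(X-A)=\emptyset$ and $\overline{A}\subseteq A$, so $A$ is closed.) Finally, for $(b)\Rightarrow(a)$ I would use $T_1$: every singleton $\{x\}$ is closed, and since $X$ is locally indiscrete every closed set is open, so each $\{x\}$ is open and $X$ is discrete.

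I do not anticipate any real obstacle: the only step carrying content is $(c)\Rightarrow(b)$, and that is essentially a reduction to part $(h\Rightarrow a)$ of the previous proposition, which is already available. The remaining implications are one-line verifications, with $T_1$ entering only to upgrade ``locally indiscrete'' to ``discrete.''
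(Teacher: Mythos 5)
Your proposal is correct: the paper itself dismisses this proof as ``straightforward,'' and your cycle $(a)\Rightarrow(c)\Rightarrow(b)\Rightarrow(a)$ --- reducing $(c)\Rightarrow(b)$ to part $(h\Rightarrow a)$ of the preceding proposition (or reproducing that argument directly) and invoking $T_1$ only to pass from locally indiscrete to discrete --- is exactly the intended filling-in. No gaps.
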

\begin{proof}
It is straightforward. 
\end{proof}

To see the definition of the concepts given in the next remark, see \cite{GJ, En}. Foe details about $P_F$-spaces, see \cite{AMM}.

\begin{Rem}
a) A locally indiscrete space need not be discrete.\\
b) If $X$ is a 	$T_\frac{1}{2}$-space, then it is locally indiscrete if and only if it is discrete.\\
c) Every  locally indiscrete space is a strongly zero-dimensional space. The converse is not true. For example, $A=\{0,1,\frac{1}{2},\cdots\}$ is a strongly zero-dimensional subspace of $\Bbb R$ while it is not  locally indiscrete.\\
d) Every  locally indiscrete space is a $P_F$-space. The converse is not true. For example, the space $\Sigma$ of 4M in \cite{GJ} is a $P_F$-space which is not locally indiscrete.\\
e) A locally indiscrete space need not be submaximal. For instance, let $X=\{a,b,c\}$ and ${\mathcal T}=\{\emptyset, \{a\}, \{b,c\}, X\}$.\\ 
f) A submaximal space need not be a locally indiscrete space. For example we consider the space $\Sigma$ of 4M in \cite{GJ}.\\
g) Every  locally indiscrete space is a $P$-space. The converse is false. For example, the space $S$ in 4.N of \cite{GJ} is a $P$-space which is not locally indiscrete.
h) Every  locally indiscrete space is a extremally disconnected and hence is basically disconnected. The converse is false. For example, the space $S$ in 4.N of \cite{GJ} is a basically disconnected which is not locally indiscrete and the space $\Sigma$ is extremally disconnected which is not locally indiscrete. 	
\end{Rem}

\begin{prop}\label{p33}
The space  $(X,{\mathcal T})$ is a locally indiscrete space  if and only if ${\mathcal T}={\mathcal T_l}$.
\end{prop}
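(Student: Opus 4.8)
The plan is to exploit the standing inclusion ${\mathcal T}\subseteq{\mathcal T_l}$ (noted just before the statement), so that proving ${\mathcal T}={\mathcal T_l}$ always reduces to establishing the reverse inclusion ${\mathcal T_l}\subseteq{\mathcal T}$, and conversely to extract local indiscreteness from that reverse inclusion.

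First I would treat $(\Rightarrow)$. Assume $X$ is locally indiscrete. I would invoke the equivalence established in the preceding proposition, in particular its condition (e): in a locally indiscrete space every locally closed subset is open. Since the locally closed sets form a base for ${\mathcal T_l}$, every ${\mathcal T_l}$-open set is then a union of ${\mathcal T}$-open sets, hence ${\mathcal T}$-open; thus ${\mathcal T_l}\subseteq{\mathcal T}$, and together with ${\mathcal T}\subseteq{\mathcal T_l}$ this gives ${\mathcal T}={\mathcal T_l}$.

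For $(\Leftarrow)$, I would assume ${\mathcal T}={\mathcal T_l}$. Every locally closed set is a basic member of ${\mathcal T_l}$, hence ${\mathcal T_l}$-open, hence ${\mathcal T}$-open by hypothesis. Applying this to closed sets — which are locally closed by part (a) of the earlier Remark — shows that every closed subset of $X$ is open, i.e. $X$ is locally indiscrete. (Alternatively one could finish by checking condition (e) or (g) of the preceding proposition, which now holds trivially.)

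I do not expect a genuine obstacle here: the entire content is packaged in the characterization of local indiscreteness given in the preceding proposition, and the direction ${\mathcal T}\subseteq{\mathcal T_l}$ is automatic, so only one inclusion is ever at issue. The only point requiring a little care is to cite the correct equivalent formulation (``every locally closed set is open'' versus ``every closed set is open'') and to make the passage from a base to the generated topology explicit in the $(\Rightarrow)$ direction.
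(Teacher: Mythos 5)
Your proposal is correct and follows essentially the same route as the paper: the forward direction uses that in a locally indiscrete space every locally closed set is open (condition (e) of the preceding proposition) so the base of ${\mathcal T_l}$ lies in ${\mathcal T}$, and the reverse direction notes that every locally closed set is ${\mathcal T_l}$-open, hence ${\mathcal T}$-open, which yields local indiscreteness. Your explicit specialization to closed sets in the converse is just a slightly more spelled-out version of the paper's final step.
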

\begin{proof}
$(\Rightarrow)$	Let $G\in {\mathcal T_l}$. Then $G=\bigcup_{\alpha\in \Lambda} A_{\alpha}$, where $A_{\alpha}$ is ${\mathcal T}$-locally closed, for any $\alpha\in\Lambda$. By hypothesis, $A_{\alpha}$ is ${\mathcal T}$-open, hence $G\in {\mathcal T}$ and we are done.\\
$(\Leftarrow)$ Let $A$ is a ${\mathcal T}$-locally closed subset of $X$. Hence $A\in {\mathcal T_l}$ and therefore $A\in{\mathcal T}$. This shows that $X$ is a locally indiscrete space.
\end{proof}

\begin{prop}
Let $X$ be a locally indiscrete space. The following conditions are equivalent.\\
a) $X$ is a $T_1$-space.\\
b) $X$ is a $T_\frac{1}{2}$-space.\\
c) $X$ is a $T_D$-space.\\
d) $X$ is a $T_0$-space.\\
e) $X$ is a submaximal space.\\
f) $X$ is a discrete space.
\end{prop}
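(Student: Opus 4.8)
The plan is to prove the cycle $(f)\Rightarrow(a)\Rightarrow(b)\Rightarrow(c)\Rightarrow(d)\Rightarrow(f)$ and, separately, $(f)\Rightarrow(e)\Rightarrow(f)$, which makes all six conditions equivalent. Most of the links are ``free'' in the sense that they hold in an arbitrary space and require nothing about local indiscreteness: a discrete space satisfies every separation axiom and is submaximal (every subset, in particular every dense subset, is open), so $(f)$ implies $(a)$, $(b)$ and $(e)$; and the standard implications $T_1\Rightarrow T_\frac{1}{2}\Rightarrow T_D\Rightarrow T_0$ give $(a)\Rightarrow(b)\Rightarrow(c)\Rightarrow(d)$. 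Hence the whole mathematical content is to recover discreteness from the two weakest hypotheses, namely $(e)$ and $(d)$.

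For $(e)\Rightarrow(f)$ I would argue as follows: if $X$ is submaximal, then by Theorem~4.2 of \cite{D} (recalled at the start of Section~2) every subset of $X$ is locally closed; but in a locally indiscrete space every locally closed set is open, which is exactly the equivalence of conditions (a) and (e) of the earlier Proposition on locally indiscrete spaces. Therefore every subset of $X$ is open, i.e. $X$ is discrete. For $(d)\Rightarrow(f)$ the key observation is that a locally indiscrete space is principal: its topology is closed under complementation (every open set is closed) and under arbitrary unions, hence also under arbitrary intersections, so each $x\in X$ has a minimal open neighbourhood $M_x=\bigcap\{G\in\mathcal{T}:x\in G\}$. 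Assuming $T_0$, I claim $M_x=\{x\}$ for every $x$. Indeed, let $y\in M_x$ with $y\neq x$; by $T_0$ some open set separates $x$ and $y$. It cannot contain $x$ and omit $y$, for such a set would contain $M_x\ni y$; so there is an open $W$ with $y\in W$ and $x\notin W$. Then $X-W$ is open by local indiscreteness, contains $x$, hence contains $M_x$, so $y\notin M_x$ — a contradiction. Thus each $\{x\}=M_x$ is open and $X$ is discrete. (Alternatively, one may use the Lemma ``every principal $T_0$-space is $T_D$'' together with principality of $X$ to get $(d)\Rightarrow(c)$, and then close the loop via $(c)\Rightarrow(f)$ exactly as in the submaximal case, since locally closed singletons are open; and one may note that $(b)\Leftrightarrow(f)$ also follows directly from part~(b) of the earlier Remark.)

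I expect the only genuinely substantive point to be the principality remark — recognizing that closure of $\mathcal{T}$ under complements and arbitrary unions forces closure under arbitrary intersections, so that minimal open neighbourhoods exist — after which the separation argument for $(d)\Rightarrow(f)$ is routine. Everything else reduces to standard separation-axiom implications or to results already established in the excerpt, so the write-up should be short.
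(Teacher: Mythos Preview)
Your argument is correct. The paper organizes the proof slightly differently: it declares all implications obvious except $(d)\Rightarrow(a)$, and proves that one directly by showing that $\overline{\{x\}}=\{x\}$ for every $x$. The two-case analysis it gives is essentially the dual of yours --- where you show $M_x=\{x\}$ (hence $\{x\}$ is open), the paper shows $\overline{\{x\}}=\{x\}$ (hence $\{x\}$ is closed), and in both cases the crux is that the complement of an open set is open. The practical difference is that closures always exist, so the paper's version needs no preliminary remark, whereas your route requires first observing that a locally indiscrete space is principal so that $M_x$ makes sense; that observation is correct and easy, but it is an extra step the paper avoids. Your explicit treatment of $(e)\Rightarrow(f)$ via ``submaximal $\Rightarrow$ every set locally closed $\Rightarrow$ every set open'' is more detailed than the paper, which simply folds this into ``obvious.''
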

\begin{proof}
All implications are obvious. We only show $(d\Rightarrow a)$. Let $x\in X$ and on the contrary suppose that $y\in\overline{\{x\}}$ and $y\neq x$. If there is an open set $G$ such that $y\in G$ and $x\notin G$, then $G\cap \{x\}\neq\emptyset$ which is not true. If there is an open set $H$ such that $x\in H$ and $y\notin H$, then $y\in X-H$ and since $X-H$ is open we infer that  $(X-H)\cap \{x\}\neq\emptyset$ and this is not true. Therefor $X$ is a $T_1$-space and we are through.
\end{proof}

\section{lc-properties}
In this section, by using the locally closed sets, we introduce some separation axioms. For more details about lc-continuous functions, see \cite{GR}. We begin with the following definition.

\begin{defn}
A space $X$ is called:\\
a) lc-regular if for each locally closed set $A$ and for each point $x\notin A$, there are disjoint open sets $U$ and $V$ with $x\in U$ and $A\subseteq V$.\\ 
b) lc-completely regular if for each locally closed set $A$ and for each point $x\notin A$, there exists a continuous function $f:X\to [0,1]$ such that $f(x)=0$ and $f(A)=1$.\\
c) lc-normal if for every two disjoint locally closed sets $A$ and $B$, there are disjoint open sets $U$ and $V$ with $A\subseteq U$ and $B\subseteq V$.
\end{defn}

Every lc-regular (resp. lc-completely regular, lc-normal)  space is a regular  (resp. completely regular, normal) space. The converse is hold in locally indiscrete spaces, but is not true, in general. See the following example.
\begin{exam}
We consider $X=\Bbb R$ with usual topology.\\
a) $\Bbb R$ is not a lc-regular space. To see this let $A=[0,1)$. Then $A$ is locally closed and $1\notin A$. But $A$ cannot be separated from $1$ by disjoint open sets.\\
b) $\Bbb R$ is not a lc-completely regular space. To see this let $A=\{1,\frac{1}{2},\cdots\}$. Then $A$ is locally closed and $0\notin A$. But $A$ cannot be separated from $0$ by a continuous function.\\
c) $\Bbb R$ is not a lc-normal space. To see this let $A=\{1,\frac{1}{2},\cdots\}$ and $B=\{0\}$. Then $A$ and $B$ are locally closed  sets. But $A$ and $B$ cannot be separated by disjoint open sets.
\end{exam}  

\begin{Rem}
a) Every lc-completely regular $T_D$-space is completely regular.\\
b) Every lc-normal $T_D$-space is Hausdorff.
\end{Rem}

\begin{defn}
A space $X$ is called lc-compact if each locally closed cover of $X$ has a finite subcover.
\end{defn}
Every lc-compact space is compact. Every infinite compact $T_1$-space is not lc-compact. One can easily see that $(X,{\mathcal T})$ is lc-compact if and only if   $(X,{\mathcal T_l})$ is compact.

\begin{prop}
Every clopen subset of a lc-compact space is lc-compact.
\end{prop}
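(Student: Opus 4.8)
The plan is to reduce the statement to the characterization, already noted in the excerpt, that $(X,\mathcal T)$ is lc-compact if and only if $(X,\mathcal T_l)$ is compact, and then to exploit the behavior of clopen sets under the passage $\mathcal T\mapsto\mathcal T_l$. So let $C$ be a clopen subset of an lc-compact space $(X,\mathcal T)$; I want to show $(C,\mathcal T\restriction C)$ is lc-compact, i.e. that $(C,(\mathcal T\restriction C)_l)$ is compact. Since $(X,\mathcal T_l)$ is compact, it suffices to show that $C$ is a closed subspace of $(X,\mathcal T_l)$ and that the subspace topology it inherits from $\mathcal T_l$ coincides with $(\mathcal T\restriction C)_l$, the $l$-topology built from the locally closed subsets of $C$ itself. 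A closed subspace of a compact space is compact, and then the topologies matching gives lc-compactness of $C$.

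The first point is immediate: $C$ is $\mathcal T$-clopen, hence $\mathcal T$-closed, hence $\mathcal T_l$-closed since $\mathcal T\subseteq\mathcal T_l$; in fact $C$ is also $\mathcal T_l$-open. The substantive point is the identification of the two topologies on $C$. Here I would use Remark (l): a set $A\subseteq C$ is locally closed in the subspace $C$ if and only if $A=C\cap B$ for some $\mathcal T$-locally closed $B\subseteq X$. Since $C$ is clopen, this should be upgraded to the cleaner fact that, for $C$ clopen, $A\subseteq C$ is $\mathcal T$-locally closed (as a subset of $X$) if and only if $A$ is $(\mathcal T\restriction C)$-locally closed (as a subset of $C$): indeed if $A=G\cap B$ with $G$ open and $B$ closed in $X$ (Proposition, part b)), then $A=(G\cap C)\cap(B\cap C)$ with $G\cap C$ open and $B\cap C$ closed in $C$; conversely an open-times-closed decomposition in $C$ is, since $C$ is clopen, an open-times-closed decomposition in $X$. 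Consequently the locally closed subsets of $C$ are exactly the locally closed subsets of $X$ that happen to lie in $C$. It follows that the basic $\mathcal T_l$-open subsets of $X$ contained in $C$ form a base for $(\mathcal T\restriction C)_l$, and since $C$ is $\mathcal T_l$-open, the basic $\mathcal T_l$-open sets contained in $C$ also form a base for the subspace topology that $C$ inherits from $\mathcal T_l$. Hence $(\mathcal T\restriction C)_l=\mathcal T_l\restriction C$.

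With the identification in hand, the conclusion is routine: $C$ is $\mathcal T_l$-closed in the compact space $(X,\mathcal T_l)$, hence $(C,\mathcal T_l\restriction C)=(C,(\mathcal T\restriction C)_l)$ is compact, which is precisely the statement that $(C,\mathcal T\restriction C)$ is lc-compact. Alternatively, and perhaps more transparently, one can argue directly with covers: given a cover of $C$ by sets locally closed in $C$, each such set is locally closed in $X$ by the observation above; adjoining the single $\mathcal T$-open (hence locally closed) set $X-C$ yields a locally closed cover of $X$, which by lc-compactness has a finite subcover, and discarding $X-C$ leaves a finite subcover of $C$.

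I expect the only real obstacle to be the clean proof that, for $C$ clopen, "$\mathcal T$-locally closed and contained in $C$" is the same as "$(\mathcal T\restriction C)$-locally closed" — i.e. getting the interaction between Remark (l) and clopenness exactly right so that no spurious passage to closures in $C$ versus $X$ is needed. Everything else is bookkeeping with bases and the already-established equivalence between lc-compactness of $(X,\mathcal T)$ and compactness of $(X,\mathcal T_l)$.
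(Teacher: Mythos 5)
Your proposal is correct, and your primary route is genuinely different from the paper's. The paper argues directly with covers: it writes each member $A_\alpha$ of a locally closed cover of the clopen set $Y$ as $A_\alpha=G_\alpha\cap F_\alpha\cap Y$ and then covers $X$ by the locally closed sets $(G_\alpha\cup(X-Y))\cap(F_\alpha\cup(X-Y))=B_\alpha\cup(X-Y)$, using clopenness of $Y$ to keep the first factor open and the second closed, extracts a finite subcover, and intersects back with $Y$. Your alternative cover argument at the end is essentially this same idea in slightly cleaner bookkeeping: instead of absorbing $X-C$ into every cover element, you note that clopenness of $C$ makes each $A_\alpha$ locally closed in $X$ outright and adjoin $X-C$ as a single extra locally closed set; both variants are valid. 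Your main argument, by contrast, is more structural: you use the equivalence (stated in the paper just before the proposition) that $(X,\mathcal T)$ is lc-compact iff $(X,\mathcal T_l)$ is compact, prove the key identity $(\mathcal T\restriction C)_l=\mathcal T_l\restriction C$ for clopen $C$ (your verification of the two bases coinciding is correct, the clopenness of $C$ being exactly what makes an open-intersect-closed decomposition inside $C$ one inside $X$ and vice versa), and then invoke the fact that a closed subspace of a compact space is compact. What this buys is a reusable lemma about how the $l$-topology restricts to clopen sets, and it isolates where clopenness is really used; what the paper's (and your alternative) argument buys is brevity and independence from the $\mathcal T_l$-compactness characterization. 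Both are complete proofs; no gaps.
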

\begin{proof}
Suppose that $Y\subseteq\bigcup_{\alpha\in \Lambda}A_\alpha\in\Lambda$, where $A_\alpha$ is locally closed set in $Y$, for each $\alpha\in \Lambda$. Hence, $A_\alpha=B_\alpha\cap Y$, which $B_\alpha$ is locally closed set in $X$, for each $\alpha\in\Lambda$. Therefor there is an open set $G_\alpha$ and a closed set $F_\alpha$ in $X$ which $B_\alpha=G_\alpha\cap F_\alpha$. Now it is clear that $X=\bigcup_{\alpha\in \Lambda}((G_\alpha\cup(X-Y))\cap(F_\alpha\cup(X-Y)))$. Since $X$ is a lc-compact space we infer that $X=\bigcup_{k=1}^n((G_{\alpha_k}\cup(X-Y))\cap(F_{\alpha_k}\cup(X-Y)))$, for a natural number $n$ and $\alpha_i\in\Lambda$ for $i=1,\cdots,n$ . It implies that $Y\subseteq\bigcup_{k=1}^n A_{\alpha_k}$, that is, $Y$ is a lc-compact space.
\end{proof}

\begin{defn}
A function $f:X\to Y$ is called lc-continuous if the converse image of any open set in $Y$ is locally closed in $X$.
\end{defn}

Every continuous function is lc-continuous. The converse is not true. For example, let $X=\{a,b\}$ and  ${\mathcal T_1}=\{\emptyset, \{a\}, X\}$ and ${\mathcal T_2}=\{\emptyset, \{a\}, \{b\}, X\}$ are two topology on $X$. We define $f:(X,{\mathcal T_1})\to (X,{\mathcal T_2})$ by $f(x)=x$. Then $f$ is lc-continuous but it is not a continuous function.

\begin{Rem}
Let $f:X\to Y$ be a onto and $X$ be a lc-compact space. Then\\
a) if $f$ is continuous then $Y$ is lc-compact.\\
b)  if $f$ is lc-continuous then $Y$ is compact.
\end{Rem}
\begin{Rem}
If $f:(X,{\mathcal T})\to Y$ is lc-continuous, then 	 $f:(X,{\mathcal T_l})\to Y$ is continuous. The converse is false. To see this we consider $(\Bbb R, {\mathcal T_u})$ and define $f:(\Bbb R, {\mathcal T_l})\to Y=\{0,1\}$ by $f(\Bbb Q)=0$ and $f(\Bbb R-\Bbb Q)=1$, which $Y$ is equipped with discrete topology. Since $(\Bbb R, {\mathcal T_l})$ is discrete, then $f$ is continuous. But $\Bbb Q=f^{-1}(\{0\})$ is not locally closed set in $(\Bbb R, {\mathcal T_u})$ while $\{0\}$ is an open set in $Y$. This shows that $f:(\Bbb R, {\mathcal T_u})\to Y$ is not a lc-continuous function.
\end{Rem}

\begin{Rem}
Let  $f:X\to Y$ be continuous.Then\\
a) if $B\subseteq Y$ is locally closed then $f^{-1}(B)\subseteq X$ is locally closed.\\
b) if $A\subseteq X$ is locally closed then $f(A)$ need not be locally closed. Let $X=\{a,b\}$,  ${\mathcal T_1}$ be discrete topology and  ${\mathcal T_2}$ be trivial topology on $X$. We define $f:(X,{\mathcal T_1})\to (X,{\mathcal T_2})$ by $f(x)=x$. Then $A=\{a\}$ is locally closed set in $(X,{\mathcal T_1})$ but $f(A)=\{a\}$ is not locally closed set in $(X,{\mathcal T_2})$. 
\end{Rem}

\begin{defn}
A function $f:X\to Y$ is called locally closed if the image of each locally closed set of $X$, is  locally closed in $Y$.
\end{defn}
If  $f:X\to Y$ is one-to-one, open and closed function, then it is locally closed. A locally closed function need not be open or closed. See the next example.
\begin{exam}
a) Let $X=\{a,b\}$,  ${\mathcal T_1}$ be discrete topology and  ${\mathcal T_2}=\{\emptyset,\{a\}, X\}$ be another topology on $X$.  We define $f:(X,{\mathcal T_1})\to (X,{\mathcal T_2})$ by $f(x)=x$. Then $f$ is locally closed but it is not an open function.\\
b)  Let $X=\{a,b,c\}$,  ${\mathcal T_1}$ be discrete topology and  ${\mathcal T_2}=\{\emptyset,\{a,c\}, \{b,c\}, \{c\}, X\}$ be another topology on $X$.  We define $f:(X,{\mathcal T_1})\to (X,{\mathcal T_2})$ by $f(x)=x$. Then $f$ is locally closed but it is not a closed function.
	
\end{exam}


\end{document}